\newcommand{\bbC}{{\mathbb C}}
\newcommand{\bbZ}{{\mathbb Z}}
\newcommand{\bbP}{{\mathbb P}}
\newcommand{\Oh}{{\mathcal O}}
\DeclareMathOperator{\HH}{H}
\DeclareMathOperator{\Ext}{Ext}
\newcommand{\onto}{\twoheadrightarrow}
\newcommand{\into}{\hookrightarrow}
\newcommand{\tensor}{\otimes}
\newcommand{\isom}{\cong}
\theoremstyle{plain}
\newtheorem{lemma}{Lemma}
\newtheorem{thm}{Theorem}
\newtheorem{cor}{Corollary}
\newtheorem{prop}{Proposition}
\newtheorem{claim}{Claim}
\theoremstyle{definition}
\newtheorem{defn}{Definition}
\newtheorem{remark}{Remark}
\long\def\comment#1{}
\begin{document}

\title{Extendable codimension two subvarieties in a general hypersurface}
\author{G.~V.~Ravindra} 
\address{Department of Mathematics, 
University of Missouri -- St. Louis, St. Louis, MO 63121, USA}
\email{girivarur@umsl.edu} 
\author{Debaditya Raychaudhury}
\address{Department of Mathematics, University of Arizona, Tucson, AZ 85721, USA}
\email{draychaudhury@arizona.edu}
\subjclass[2010]{14J70 (primary), and 13D02 (secondary)} 

\begin{abstract}
We exhibit a class of {\it extendable} codimension $2$ subvarieties in a general hypersurface of dimension at least 
{\color{black} 4} in projective space. As a consequence, we prove 
that a general hypersurface of degree $d$ {\color{black} and dimension at 
least $4$}
does not support globally generated indecomposable ACM bundles of any rank if their first Chern class $e \ll d$.
\end{abstract}

\date{\today}
\maketitle

\pagenumbering{Alph}

\pagenumbering{arabic}

\thispagestyle{empty}

\section{Introduction}
Let $Y$ be a smooth projective variety and $X\subset Y$ be a smooth subvariety. Relating the geometry
of $X$ and $Y$ has been a long standing theme in algebraic geometry. Results in this context are usually
referred to as Lefschetz theorems. The best known results are the {\it Grothendieck-Lefschetz} and
 {\it Noether-Lefschetz} theorems.  A special case of the Noether-Lefschetz theorem says that for a
 very general hypersurface $X\subset \bbP^3$ of degree $d\geq 4$, any curve $C\subset X$ is a complete
 intersection in $\bbP^3$. In particular, $C=X\cap{S}$ for a surface  $S\subset\bbP^3$ and thus
 {\it extendable} (there is a related notion of extendability in the literature, a very 
 nice survey on which can be found in \cite{Lop}. See the references therein, especially \cite{Wahl87} and \cite{BM87}).  

 \smallskip
 
 More generally, throughout this article, we will say a codimension $k$ subscheme $Z\subset X$ of a smooth 
 hypersurface $X\subset\mathbb{P}^{n+1}$ is {\it extendable} if $Z=X\cap \Sigma$ {scheme-theoretically,} where 
 $\Sigma\subset \mathbb{P}^{n+1}$ is a {pure} codimension $k$ subscheme. 

\smallskip
 
 With a view to finding a generalisation of the Noether-Lefschetz theorem, Griffiths and Harris in \cite{GH}, 
 asked whether any curve in a general hypersurface  $X\subset \bbP^4$ of degree $d\geq 6$ is extendable. 
 The main idea is that codimension $2$ subvarieties in projective spaces are already more complicated 
 (for instance, not all of them are defined by $2$ homogeneous polynomials) and the expectation was that 
 perhaps the codimension $2$ geometry of general hypersurfaces is no more complicated, thus establishing a
 Lefschetz type result.

 \smallskip
 
 C.~Voisin in \cite{Voisin} showed the existence of curves in smooth hypersurfaces $X \subset \bbP^4$ 
 which were not cut out by 
 surfaces in $\bbP^4$. One of the fundamental differences in these two cases is the following. Consider the normal 
 bundle sequence for the inclusions $C\subset X \subset \bbP^{n+1}$:
 $$0 \longrightarrow N_{C/X} \longrightarrow N_{C/\bbP^{n+1}} \longrightarrow \Oh_C(d) \longrightarrow 0.$$
 For smooth hypersurfaces in $\bbP^3$, this sequence splits if and only if $C$ is extendable and hence
 a complete intersection
 (see \cite{GH1}). However, this is no longer true once $C$ is a curve in a smooth hypersurface $X\subset \bbP^4$.
In this case, the splitting of the above sequence only implies that $C$ is {\it infinitesimally extendable}, i.e., there
exists a curve $D\subset X_{(1)}$ where $X_{(1)}$ is the first order thickening of $X$ in $\bbP^4$ such that 
$C=D\cap{X}$. If $C\subset X$ (or more generally a codimension $2$ subvariety $Z$ in a smooth hypersurface
of dimension $n\geq 4$) is, in addition, {\it arithmetically Cohen Macaulay} (henceforth, we abbreviate this as {\it 
ACM}), then it was shown in \cite{MPRV} that if $C$ extends
infinitesimally, then it is in fact extendable in the above sense (see Corollary \ref{mprvcor} for the precise statement). This fact was used to show the existence of a large 
class of counterexamples generalising
Voisin's examples in \cite{Voisin}.  There are also examples of non-extendable subvarieties in higher codimension (see 
for instance \cite{IN}).

\smallskip
 
 Coming back to the case of curves in hypersurfaces in $\bbP^4$, and their extendability, a conjecture in \cite{RT4}
 proposes that any ACM curve $C$ in a general hypersurface $X\subset \bbP^4$ of degree $d\geq 6$ 
  is extendable if the number of generators of the canonical module of the curve $C$ 
 is less than or equal to $2$. When the canonical module has a single generator, the curve $C$ is {\it subcanonical}
 and  the main  result of \cite{R} (see also \cite{MPR2}) states that $C$ is in fact a complete intersection. When the number 
 of generators of the canonical module is $2$, barring  a few exceptions, this conjecture was settled in \cite{RT6}.

 \smallskip

 Extendability of codimension $2$ ACM subvarieties in smooth hypersurfaces is related to  a conjecture of 
 Buchweitz-Greuel-Schreyer (\cite{BGS}) on the non-existence of low rank indecomposable ACM vector bundles 
 and a generalisation of this conjecture (see \cite{Faenzi} and \cite{RT4}), results on which are proven, 
 for example, in \cite{AT1, AT2,RT4,RT6}. It is also related to the {\it Ulrich complexity} of hypersurfaces 
 (\cite{Bea',ES}); we refer to \cite{EC17, Bea, CMRPL} for an overview of this topic, see also \cite{RT6,LR1,LR2}.

 \smallskip
 
 In this article, we exhibit a bigger class of extendable surfaces $Z$ in a general hypersurface 
 $X \subset {\color{black} \bbP^5}$  of degree $d$. As a consequence, we prove a splitting result for ACM bundles 
 $E$ on $X$. The expert will immediately see that the results in this article are far from being sharp.
 Indeed our aim here has been to showcase how well-known and beautiful results available in the 
 literature can be brought together to answer some rather long standing questions of interest.

 \medskip

\noindent{\it Conventions.} We work over the field of complex numbers $\mathbb{C}$. A {\it variety} is an integral separated scheme of finite type over $\mathbb{C}$. A {\it curve} (resp. {\it surface}) is a variety of dimension one (resp. two). 

\medskip

\noindent{\it Acknowledgements.} The authors are grateful to Amit Tripathi for very useful discussions. We are also grateful to the anonymous referee for a number of corrections and invaluable suggestions. The first author acknowledges 
support from the Simons foundation. The second author is partially supported by an AMS-Simons Travel Grant.
		
\section{Statements of the main results}\label{2}
In this section, we provide the statements of our main results. We start by recalling that a subvariety $Z\subset W$ is said {\it ACM} if $H^i_*(W,I_{Z/W})=0$ for $1\leq i\leq \dim Z$ where $I_{Z/W}$ is the ideal sheaf of $Z$ in $W$. 

{Given any coherent sheaf $\mathcal{F}$ on an ACM
variety $Y \subset \bbP^N$, note that the 
module of global sections $\Gamma(\mathcal{F}):=\bigoplus_{a\in\bbZ}\HH^0(X, \mathcal{F}(a))$ is a 
finitely generated 
module over the polynomial ring in $N+1$-variables. Any choice of a  set of {\it generators} $\{s_i\}$ with 
${\rm degree}(s_i)=a_i$, yields a surjection
$$\bigoplus_i \Oh_Y(-a_i) \onto \mathcal{F}$$
which induces a surjection at the level of the $\Gamma$ modules, i.e., a surjection 
$$\bigoplus_i\HH^0(Y,\mathcal{O}_{Y}(a-a_i))\longrightarrow \HH^0(Y,\mathcal{F}(a))\textrm{ for all } 
a\in\mathbb{Z}.$$
\begin{defn}
   We say that the cohomology of a coherent sheaf $\mathcal{F}$ on an ACM variety $Y\subset\mathbb{P}^N$ is {\it generated in degree $k$} (by $m$ sections) if there exists a set of generators $\{s_i\}$ with  ${\rm degree}(s_i)=k$ (consisting of $m$ elements).
\end{defn}
\begin{remark}
    Note that if the cohomology of $\mathcal{F}$ is generated in degree $k$, then $\mathcal{F}(k)$ is globally generated.
\end{remark}
}

The aim of this article is to prove the following:

\begin{thm}\label{mainthm1}
Let $X\subset \bbP^{n+1}$ be a general hypersurface of dimension {\color{black} $n\geq 4$} and degree $d$. 
A local complete intersection, ACM codimension $2$ subvariety $Z\subset X$ 
is extendable if there exists a positive integer $e$ such that
\begin{enumerate}
\item[(i)] $\binom{e+5}{4} \leq 2d-4$, 
\item[(ii)] $I_{Z/X}(e)$ is globally generated, and 
\item[(iii)] { the cohomology of the line bundle 
$\omega_Z\otimes\omega_X^{-1}$ is generated in 
degree $-e$.}
\end{enumerate}
{\color{black} The same conclusion holds when $\dim{X}=3$ if condition $(ii)$
above is replaced by the stronger condition that ${I}_{Z/\bbP^4}(e)$
is globally generated.}
\end{thm}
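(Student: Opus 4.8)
The plan is to treat the case $n\ge 4$ by reducing extendability to \emph{infinitesimal} extendability and then killing the resulting first-order obstruction using the generation hypotheses together with the generality of $X$; the case $n=3$, where Corollary~\ref{mprvcor} is unavailable, is handled separately. As a preliminary I would first record a vanishing: from $0\to \Oh_{\bbP^{n+1}}(-d)\to I_{Z/\bbP^{n+1}}\to I_{Z/X}\to 0$ together with $\HH^i_*(\bbP^{n+1},\Oh(-d))=0$ for $0<i<n+1$, the ACM vanishing $\HH^i_*(X,I_{Z/X})=0$ ($1\le i\le n-2$) propagates, so that $Z$ is itself ACM in $\bbP^{n+1}$. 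In particular the intermediate cohomology $\HH^i(Z,\Oh_Z(k))$ vanishes for $1\le i\le n-3$ and all $k$, a fact I will use freely.

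By Corollary~\ref{mprvcor} it suffices, for $n\ge 4$, to show that $Z$ is infinitesimally extendable, i.e. that the normal bundle sequence
\[ 0\longrightarrow N_{Z/X}\longrightarrow N_{Z/\bbP^{n+1}}\longrightarrow \Oh_Z(d)\longrightarrow 0 \]
splits. Its class lies in $\Ext^1(\Oh_Z(d),N_{Z/X})\isom \HH^1(Z,N_{Z/X}(-d))$, and splitting is equivalent to the vanishing of this class $\xi$. Because $X$ is ACM, $\HH^1(\bbP^{n+1},\Oh(e-d))=0$ yields a surjection $\HH^0(I_{Z/\bbP^{n+1}}(e))\onto \HH^0(I_{Z/X}(e))$; hence the degree-$e$ generators of $I_{Z/X}$ furnished by $(ii)$ lift to forms $\tilde g_i\in \HH^0(I_{Z/\bbP^{n+1}}(e))$ which, together with the defining equation $F$ of $X$, generate $I_{Z/\bbP^{n+1}}$. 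The assignment $dg_i\mapsto d\tilde g_i$ is then a candidate splitting of the conormal sequence $0\to \Oh_Z(-d)\to N^*_{Z/\bbP^{n+1}}\to N^*_{Z/X}\to 0$: condition $(ii)$ guarantees that $N^*_{Z/X}(e)$ is globally generated so that the $dg_i$ really generate, while condition $(iii)$, read through adjunction $\det N_{Z/X}\isom \omega_Z\otimes\omega_X^{-1}$, controls the determinant and hence, via $N^*_{Z/X}\isom N_{Z/X}\otimes(\det N_{Z/X})^{-1}$, the relevant twists of the rank-two bundle $N_{Z/X}$. The candidate fails to be well defined only by a discrepancy valued in the ``$dF$ direction'' $\Oh_Z(-d)$, and this discrepancy is precisely $\xi$.

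The main obstacle is to show that $\xi=0$ for a general $X$. The numerical hypothesis $(i)$ forces $e\ll d$ (note $\binom{e+5}{4}=\HH^0(\bbP^4,\Oh(e+1))$), so there is essentially no freedom left in the lift $\tilde g_i$ and $\xi$ is pinned down by $F$; one must therefore argue that the locus of degree-$d$ forms $F$ for which $Z\subset\{F=0\}$ fails to extend infinitesimally is a proper subvariety of the parameter space. Here I would combine the ACM intermediate-cohomology vanishing above with Serre duality $\HH^1(Z,N_{Z/X}(-d))^{\vee}\isom \HH^{n-3}(Z,N_{Z/X}(2d-n-2))$ to bound the obstruction space, and then run a Noether--Lefschetz/Voisin style dimension count in which $(i)$ is exactly the inequality that makes the count favourable. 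Granting $\xi=0$, the sequence splits, $Z$ extends infinitesimally, and Corollary~\ref{mprvcor} upgrades this to genuine extendability.

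Finally, for $n=3$ the scheme $Z$ is a curve and Corollary~\ref{mprvcor} no longer applies, so infinitesimal extendability must be promoted to extendability by hand. This is where the strengthened hypothesis that $I_{Z/\bbP^4}(e)$ (rather than merely $I_{Z/X}(e)$) be globally generated enters: it supplies degree-$e$ equations for $Z$ already on $\bbP^4$, which play the role that the higher-dimensional ACM vanishing played above and allow the first-order extension to be propagated to an honest pure codimension-two $\Sigma\subset\bbP^4$ with $\Sigma\cap X=Z$. The generality of $X$ is used to ensure that $\Sigma$ has the expected codimension and is pure.
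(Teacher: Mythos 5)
Your reduction of extendability to the splitting of the normal bundle sequence via Corollary \ref{mprvcor} matches the paper's endgame, but the heart of the theorem --- actually proving that the sequence splits for a general $X$ --- is missing from your proposal. The sentence ``run a Noether--Lefschetz/Voisin style dimension count in which $(i)$ is exactly the inequality that makes the count favourable'' is a placeholder, not an argument: it is unclear what parameter space you are counting in (the subvariety $Z$ is not given independently of $X$), and no mechanism is offered that converts hypotheses $(i)$--$(iii)$ into the vanishing of the obstruction class $\xi\in\HH^1(Z,N_{Z/X}(-d))$. The paper does something quite different and much more structured: it first passes to a general hyperplane section curve $C\subset Y\subset\bbP^4$ (using Lemma \ref{restriction} to lift a splitting of the associated ACM bundle back up the tower of hyperplane sections), and then verifies the Beauville--M\'erindol criterion for the curve sequence $0\to N_{C/Y}\to N_{C/\bbP^4}\to\Oh_C(d)\to 0$. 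Condition (a) of that criterion is where the generality of $X$ enters, via \cite[Proposition 3.2]{BK}; condition (b) is a cup-product surjectivity, established by applying Green's Koszul vanishing theorem to a base-point-free linear subsystem $\widetilde{W}_{\bbP^4}\subset\HH^0(\bbP^4,\Oh_{\bbP^4}(e+1))$ built from the image of $\HH^0(Z,N_{Z/X}(1))\to\HH^0(Z,\Oh_Z(e+1))$ --- a construction that needs a smooth member $\Theta\in|I_{Z/X}(e)|$, the global generation of $N_{Z/X}(b)$ for $b>0$ on a general hypersurface, and hypothesis (iii) via Corollary \ref{2gen}. The numerical hypothesis $(i)$ appears precisely as the codimension bound $\mathrm{codim}(\widetilde{W}_{\bbP^4})\le 2d-6$ demanded by Green's theorem. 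None of this machinery, nor any substitute for it, appears in your write-up, so the proposal as it stands does not prove the theorem.

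A secondary point: your explanation of why the case $n=3$ needs the stronger hypothesis is not right. Corollary \ref{mprvcor} is available and is in fact used for $n=3$ in the paper (the splitting of the normal bundle sequence still gets upgraded to splitting of the ACM bundle $E$ and hence to extendability). The real reason for the strengthened hypothesis is that for $n\ge 4$ the base locus of the lifted linear system $\widetilde{W}\subset\HH^0(\bbP^5,\Oh_{\bbP^5}(e+1))$ is only known to be a finite set in the complement of $X$, which a further general hyperplane section avoids; when $n=3$ there is no hyperplane section left to take, so one needs $I_{C/\bbP^4}(e+1)$ globally generated outright to make the relevant linear system on $\bbP^4$ base point free.
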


\begin{remark}
Note that if $N_{Z/X}$ is the normal bundle of $Z\subset X$ in the above, then $$\omega_Z\otimes\omega_X^{-1}=\det{N_{Z/X}}.$$
\end{remark}

Here's an example of a situation in which such codimension two subvarieties arise. Let $E$ be a globally generated ACM bundle of rank $r$ on a smooth,
degree $d$ hypersurface $X\subset\mathbb{P}^{n+1}$ with  \textcolor{black}{$n\geq 4$}. Any choice of $r-1$ general sections yields an exact sequence
$$0 \longrightarrow \Oh_X^{\oplus r-1} \longrightarrow E \longrightarrow I_{Z/X}(e) \longrightarrow 0.$$
Here $Z\subset X$ is {either empty, or a} codimension 2 subvariety defined by the vanishing of these $r-1$ sections, $I_{Z/X}$ is its ideal sheaf and $e$ is the first
Chern class of $E$. If $e$ satisfies the inequality $(i)$ in Theorem \ref{mainthm2}, then $Z${, if non-empty,} is extendable; i.e., $Z =X\cap{\Sigma}$ for some
pure codimension 2 subscheme $\Sigma\subset \bbP^{n+1}$. {This is because in this case (ii) is obvious as $E$ is assumed to be globally generated, and (iii) can be seen by dualizing the above exact sequence and passing to cohomology (the proof is similar to that of Proposition \ref{serre}).}

\smallskip

The proof of Theorem \ref{mainthm2} is based on an induction argument, the main step of which is proving the assertion when {\color{black} $n=4$}. The main ingredient of the proof in this case is the following:
{\color{black} 
\begin{thm}\label{mainthm2}
Let $X$ be a general hypersurface in $\bbP^5$ of degree $d$ and let $Z 
\subset X$ be an ACM local complete intersection surface. Suppose that $Z$ 
satisfies the following conditions for some integer $e>0$:
\begin{itemize}
\item[(i)] $\binom{e+5}{4} \leq 2d-4$, 
\item[(ii)] there exists a smooth member $\Theta\in |I_{Z/X}(e)|$, 
\item[(iii)] {$I_{Z/X}(e+1)$ is globally generated, and
\item[(iv)] the cohomology of $\omega_Z\otimes\omega_X^{-1}$ is generated in degree $-e$.}
\end{itemize}
For a general hyperplane $H \isom \bbP^4 \subset \bbP^5$, let 
$ C:= Z\cap{H}$, $Y:= X \cap{H}$ be the hyperplane sections. Then the normal bundle sequence 
\begin{equation}\label{nbs}
0 \longrightarrow N_{C/Y} \longrightarrow N_{C/\bbP^4} \longrightarrow \Oh_C(d) \longrightarrow 0.
\end{equation}
associated to the inclusions $ C\subset Y \subset \bbP^4$ splits. 
\end{thm}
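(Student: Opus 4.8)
The plan is to prove that the extension class of \eqref{nbs} vanishes, and the guiding idea is that this class, although it lives a priori on the curve $C$ where the relevant cohomology is typically nonzero, is the pullback of a class on the surface $Z$ where a genuine vanishing theorem is available.

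First I would pass from $C$ to $Z$. For a general hyperplane $H$ the scheme-theoretic intersections $C=Z\cap H$ and $Y=X\cap H$ are transverse, so (by Bertini) $C$ is an integral curve and an effective Cartier divisor on $Z$, and one has the standard identifications $N_{C/Y}\cong N_{Z/X}|_C$, $N_{C/\bbP^4}\cong N_{Z/\bbP^5}|_C$ and $N_{Y/\bbP^4}|_C\cong \Oh_C(d)$. Under these, the sequence \eqref{nbs} is precisely the restriction to $C$ of the normal bundle sequence
\[
0\longrightarrow N_{Z/X}\longrightarrow N_{Z/\bbP^5}\longrightarrow \Oh_Z(d)\longrightarrow 0
\]
of $Z\subset X\subset\bbP^5$, the surjection being the restriction of the derivative of the equation of $X$. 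Since the restriction of a split sequence is split, it suffices to split this sequence on $Z$, i.e. to show that its class in $\Ext^1_{\Oh_Z}(\Oh_Z(d),N_{Z/X})=H^1(Z,N_{Z/X}(-d))$ is zero; I will in fact show the whole group vanishes.

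The vanishing $H^1(Z,N_{Z/X}(-d))=0$ is the crux, and here the extra codimension enters. By hypothesis $(ii)$ there is a smooth threefold $\Theta\in|I_{Z/X}(e)|$ containing $Z$; since $Z$ is a pure codimension one subscheme of the smooth $\Theta$ it is an effective Cartier divisor on $\Theta$, and the normal bundle sequence of $Z\subset\Theta\subset X$ gives
\[
0\longrightarrow N_{Z/\Theta}\longrightarrow N_{Z/X}\longrightarrow N_{\Theta/X}|_Z\longrightarrow 0,
\]
with $N_{\Theta/X}|_Z=\Oh_Z(e)$ and, by taking determinants and using adjunction, $N_{Z/\Theta}=\det N_{Z/X}\otimes\Oh_Z(-e)=(\omega_Z\otimes\omega_X^{-1})(-e)$ --- the line bundle whose positivity is recorded in $(iv)$. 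Twisting by $\Oh_Z(-d)$, it is enough to kill the two outer $H^1$'s. The term $H^1(Z,\Oh_Z(e-d))$ vanishes because $Z$ is ACM: from $0\to I_{Z/X}\to\Oh_X\to\Oh_Z\to 0$ and the ACM-ness of the hypersurface $X$ one gets $H^1(Z,\Oh_Z(m))\cong H^2(X,I_{Z/X}(m))=0$ for every $m$ (here $\dim Z=2$ is used). For the term $H^1(Z,N_{Z/\Theta}(-d))$, Serre duality on the lci (hence Cohen--Macaulay) surface $Z$, together with $\omega_Z=\Oh_Z(d-6)\otimes\det N_{Z/X}$, identifies it with $H^1(Z,\Oh_Z(e+2d-6))^\vee$, which vanishes for the same reason. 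Hence $H^1(Z,N_{Z/X}(-d))=0$, the sequence on $Z$ splits, and so does its restriction \eqref{nbs}.

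The main obstacle is conceptual rather than computational: one must realise that the obstruction has to be analysed on the surface $Z$ and not on the curve $C$. Indeed, on $C$ the analogous group does not vanish --- by Serre duality $H^1(C,N_{C/Y}(-d))^\vee\cong H^0(C,N_{C/Y}(2d-5))$ --- so no curve-level computation can succeed, whereas on the surface the ACM vanishing $H^1(Z,\Oh_Z(\bullet))=0$ forces the class to die; the intermediate divisor $\Theta$, available only because $Z$ has codimension two, is exactly what splits $N_{Z/X}$ into line bundles to which this vanishing applies. The remaining hypotheses serve to make the reduction legitimate and to feed the surrounding argument: the general $H$ secures the transversality and the normal-bundle identifications, the smoothness in $(ii)$ produces $\Theta$ and the Cartier structure of $Z\subset\Theta$, while $(i)$ and the global generation $(iii)$ guarantee that the configuration exists and that the resulting $C\subset Y$ is well enough behaved to carry the argument into the extendability and induction steps. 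I expect the only delicate verifications to be that $Z$ is genuinely Cartier on $\Theta$ and that the identifications $N_{C/Y}\cong N_{Z/X}|_C$ hold for general $H$, both of which are routine once transversality is in hand.
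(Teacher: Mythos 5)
Your argument is correct as a proof of the theorem exactly as stated, but it takes a genuinely different and far more economical route than the paper. The paper never attacks the extension class on $Z$: it restricts everything to the curve $C$, where the obstruction space $H^1(C,N_{C/Y}(-d))$ is large, and kills the class via the Beauville--M\'erindol criterion, verifying the surjectivity of the cup product $\HH^0(C,\Oh_C(d))\tensor \HH^0(C,N_{C/Y}(d-5))\to \HH^0(C,N_{C/Y}(2d-5))$ by filtering $N_{C/Y}$ through the restriction of the very same $\Theta$-sequence you use and then invoking Green's Koszul vanishing theorem; this is exactly where hypotheses (i) and (iii) and the generality of $X$ enter. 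Your observation that the obstruction already dies on the surface --- since $\HH^1(Z,\Oh_Z(m))=0=\HH^1(Z,\omega_Z(m))$ for all $m$ by ACM-ness plus Serre duality, and $N_{Z/X}$ is filtered by the line bundles $\omega_Z(6-d-e)$ and $\Oh_Z(e)$ --- is sound step by step, and it shows that under hypothesis (ii) the conclusion needs none of (i), (iii), or the generality of $X$ and $H$ beyond transversality.

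You should, however, notice what this reveals about hypothesis (ii), which is the sole engine of your proof. A smooth member $\Theta\in|I_{Z/X}(e)|$ is a smooth ample threefold divisor in $X$, in fact a smooth complete intersection in $\bbP^5$, so Grothendieck--Lefschetz gives $\Pic(\Theta)=\bbZ\cdot\Oh_\Theta(1)$; since $Z$ is a divisor on $\Theta$, it is then itself a complete intersection and the splitting (indeed extendability) is immediate. In other words, (ii) read literally empties the theorem of content beyond complete intersections, and the brevity of your argument is a symptom of that. In the intended application (the proof of Theorem \ref{mainthm1}) the divisor $\Theta$ is a determinantal degeneracy locus $D_{r-1}(\phi)$, which in a fourfold is in general singular at the finitely many points of $D_{r-2}(\phi)$, and these lie on $Z$; if (ii) is weakened to accommodate this, then your line-bundle filtration of $N_{Z/X}$ --- which needs $Z$ to be Cartier on $\Theta$ and the conormal sequence to be left exact, i.e.\ smoothness of $\Theta$ along all of $Z$ --- breaks down precisely at those points (as does the paper's own sequence \eqref{nbs2}). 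So record your proof as valid for the statement as written, but be aware that it does not survive the weakening of (ii) that the rest of the paper implicitly requires, whereas the authors' curve-level Beauville--M\'erindol argument is structured to carry the numerical hypothesis (i) for exactly that purpose.
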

}

When $X$ is a general hypersurface in $\bbP^4$ of degree $d$ and $C \subset 
X$ is an ACM curve, we show that the splitting of \eqref{nbs} implies 
extendability of $C$. Similar ideas were used by Voisin in \cite{V} in the 
context of the extendability of curves in K3 surfaces (in the sense 
discussed in \cite{Lop}).

\smallskip

As mentioned in the Introduction, extendability of pure codimension 2 subvarieties is intimately related with the splitting of
ACM vector bundles (cf. Lemma \ref{l3} and Corollary \ref{mprvcor}). In this direction, we deduce the following by-product of our results:
\begin{thm}\label{ggextendable}
Fix a positive integer $e$. Then a general hypersurface of dimension 
{\color{black} $n \geq 4$} and degree $d$ satisfying the inequality 
$\binom{e+5}{4} \leq 2d-4$ does not support any globally generated ACM bundle $E$ with first Chern class $c_1(E) = 
\Oh_X(e)$ {that is not a direct sum of line bundles}.
\end{thm}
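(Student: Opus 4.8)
The plan is to induct on the rank $r$ of $E$, peeling off a line-bundle summand at each stage. When $r=1$, a globally generated line bundle on a general hypersurface of dimension $n\geq 3$ is $\Oh_X(e)$ by the Grothendieck--Lefschetz theorem, so there is nothing to prove. Assume $r\geq 2$ and choose $r-1$ general global sections of $E$. Since $E$ is globally generated, the degeneracy locus $Z$ of the induced map $\Oh_X^{\oplus r-1}\to E$ is either empty or a local complete intersection subvariety of codimension $2$ (smooth for a general choice of sections), and we obtain the Serre-type exact sequence
\begin{equation*}
0 \longrightarrow \Oh_X^{\oplus r-1} \longrightarrow E \longrightarrow I_{Z/X}(e) \longrightarrow 0.
\end{equation*}

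First I would dispose of the case $Z=\varnothing$. Here $I_{Z/X}=\Oh_X$ and the sequence is an extension of $\Oh_X(e)$ by $\Oh_X^{\oplus r-1}$, classified by $\Ext^1_X(\Oh_X(e),\Oh_X)^{\oplus r-1}=\HH^1(X,\Oh_X(-e))^{\oplus r-1}$. As $X$ is an ACM hypersurface of dimension $n\geq 4$, this group vanishes, so the extension splits and $E\isom \Oh_X(e)\oplus\Oh_X^{\oplus r-1}$ is a direct sum of line bundles, as desired.

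Next, for $Z\neq\varnothing$, I would verify that $Z$ satisfies the hypotheses of Theorem \ref{mainthm1}. That $Z$ is ACM follows by reading the long exact cohomology sequence of the displayed extension together with the fact that both $\Oh_X$ and $E$ are ACM; condition $(i)$ is exactly the numerical hypothesis of the present theorem; condition $(ii)$, the global generation of $I_{Z/X}(e)$, is immediate from the surjection $E\onto I_{Z/X}(e)$ and the global generation of $E$; and condition $(iii)$, that the cohomology of $\omega_Z\otimes\omega_X^{-1}=\det N_{Z/X}$ be generated in degree $-e$, I would extract by dualizing the extension and passing to cohomology, exactly as in the proof of Proposition \ref{serre}. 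Theorem \ref{mainthm1} then yields that $Z$ is extendable, i.e. $Z=X\cap\Sigma$ for a pure codimension $2$ subscheme $\Sigma\subset\bbP^{n+1}$.

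Finally I would convert the extendability of $Z$ into decomposability of $E$ using the link between extendable codimension $2$ subvarieties and splitting of ACM bundles recorded in Lemma \ref{l3} and Corollary \ref{mprvcor}: the extension $\Sigma$ lets one lift the Serre sequence to $\bbP^{n+1}$, producing a locally free sheaf $\mathcal{E}$ on $\bbP^{n+1}$ restricting to $E$ on $X$; the restriction sequence $0\to\mathcal{E}(-d)\to\mathcal{E}\to E\to 0$ propagates the vanishing of the intermediate cohomology of $E$ to $\mathcal{E}$, so that $\mathcal{E}$ is ACM on $\bbP^{n+1}$ and hence splits by Horrocks' criterion. Restricting this splitting to $X$ exhibits a line-bundle direct summand $\Oh_X(a)$ of $E$; writing $E=\Oh_X(a)\oplus E'$ with $E'$ globally generated, ACM, of rank $r-1$ and first Chern class $e-a$ satisfying $0\leq e-a\leq e$ (so that $\binom{(e-a)+5}{4}\leq\binom{e+5}{4}\leq 2d-4$ and the numerical hypothesis persists), the inductive hypothesis finishes the proof. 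The hard part will be this last step: making precise the passage from the extended subscheme $\Sigma$ to a genuine locally free extension $\mathcal{E}$ of $E$ across $\bbP^{n+1}$ and controlling its intermediate cohomology, since $\Sigma$ is only known to be a pure codimension $2$ subscheme and need not automatically inherit the favourable depth and cohomological properties of $Z$.
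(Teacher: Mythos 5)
Your overall strategy --- choose $r-1$ general sections to get the Serre sequence, dispose of $Z=\varnothing$ using the vanishing of $\HH^1_*(X,\Oh_X)$, and for $Z\neq\varnothing$ verify the hypotheses of Theorem \ref{mainthm1} (global generation of $I_{Z/X}(e)$ from the surjection $E\onto I_{Z/X}(e)$, condition (iii) by dualizing as in Proposition \ref{serre}) --- is exactly the paper's route; the paper merely packages it by first restricting to a general $4$-dimensional linear section via Lemma \ref{restriction}. One small but genuine omission: Theorem \ref{mainthm1} is stated for a codimension $2$ \emph{subvariety}, i.e.\ an integral scheme, so you must check that the degeneracy locus $Z$ of general sections is irreducible, not just smooth. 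The paper does this explicitly: for $Z\neq\varnothing$ ACM of positive dimension, $h^0(Z,\Oh_Z)=1$, so $Z$ is connected and hence irreducible.

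The real problem is your final step, which is both unnecessary and, as you concede, the place the argument would stall. Once $Z$ is extendable, Corollary \ref{mprvcor} (the equivalence $(2)\Leftrightarrow(1)$ under the ACM hypothesis, applied to the very Serre sequence you wrote down) immediately yields that $E$ itself is a direct sum of line bundles; no induction on the rank, no peeling off of a single summand, and no extension of $E$ to $\bbP^{n+1}$ are needed. The Horrocks route you sketch has a genuine gap: $\Sigma$ is only known to be a pure codimension $2$ subscheme, so there is no Serre-type construction producing a locally free $\mathcal{E}$ on $\bbP^{n+1}$ with $\mathcal{E}\tensor\Oh_X\isom E$, and even granting such an $\mathcal{E}$, propagating the vanishing of intermediate cohomology up to $\bbP^{n+1}$ requires an extra duality argument in degree $n$. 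The paper deliberately avoids all of this by passing through the first-order thickening $X_{(1)}$ and the results of \cite{MPRV}; that is precisely the content of the implication $(4)\Rightarrow(1)$ in Corollary \ref{mprvcor}. Replace your last paragraph by a direct appeal to Corollary \ref{mprvcor} and the argument closes, in essentially the same way as the paper's.
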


Our proof of Theorem \ref{mainthm1} makes use of the Beauville-M\'erindol criterion (see \cite{BM87}) for splitting of short exact sequences, combining it with
Green's exactness criterion for Koszul complexes (see \cite{Green}).

\section{Preliminaries on Hartshorne-Serre correspondence} 
We recall the Hartshorne-Serre correspondence for codimension $2$ subschemes in a smooth variety that will be crucial for us the sequel:
\begin{thm}[{\cite[Theorem 1]{arrondo}}]\label{arr}
Let $X$ be a smooth, projective variety and $Z \subset X$ be a locally complete intersection subvariety of codimension $2$.
Let $L$ be a line bundle such that 
\begin{enumerate}
\item[(i)] $\HH^2(X, L^{-1})=0$, and
\item[(ii)] $\omega_Z\tensor (\omega_X\tensor L)^{-1}$ is globally generated by $(r-1)$ sections.
\end{enumerate}
Then there exists a rank $r$ vector bundle $E$ and an exact sequence
\begin{equation}\label{same}
    0 \longrightarrow \Oh_X^{\oplus r-1} \longrightarrow E \longrightarrow I_{Z/X}\tensor{L} \longrightarrow 0.
\end{equation}
Furthermore, if $\HH^1(X, L^{-1})=0$, then $E$ is unique up to an unique isomorphism.
\end{thm}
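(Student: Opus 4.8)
The plan is to construct $E$ as an extension and to recognize that extensions of the form \eqref{same} are classified by the group $\Ext^1_X(I_{Z/X}\tensor L,\Oh_X^{\oplus r-1})\isom\Ext^1_X(I_{Z/X}\tensor L,\Oh_X)^{\oplus r-1}$. The entire argument is governed by the local-to-global spectral sequence for Ext,
$$\HH^p\big(X,\mathcal{E}xt^q(I_{Z/X}\tensor L,\Oh_X)\big)\abuts\Ext^{p+q}_X(I_{Z/X}\tensor L,\Oh_X),$$
so the first thing I would do is compute the local Ext sheaves $\mathcal{E}xt^q(I_{Z/X},\Oh_X)$ and relate them to the data in the hypotheses.

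To carry this out I would apply $\mathcal{H}om(-,\Oh_X)$ to the structure sequence $0\to I_{Z/X}\to\Oh_X\to\Oh_Z\to0$. Since $Z$ is a codimension $2$ local complete intersection, it is locally cut out by a regular sequence of length two, and its Koszul resolution gives $\mathcal{E}xt^q(\Oh_Z,\Oh_X)=0$ for $q\neq2$ together with $\mathcal{E}xt^2(\Oh_Z,\Oh_X)\isom\det N_{Z/X}\isom\omega_Z\tensor\omega_X^{-1}$ (by adjunction). The resulting long exact sequence then yields $\mathcal{H}om(I_{Z/X},\Oh_X)\isom\Oh_X$, the isomorphism $\mathcal{E}xt^1(I_{Z/X},\Oh_X)\isom\omega_Z\tensor\omega_X^{-1}$, and the vanishing of all higher sheaf Ext. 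Twisting by $L^{-1}$ gives $\mathcal{H}om(I_{Z/X}\tensor L,\Oh_X)\isom L^{-1}$ and, crucially, $\mathcal{E}xt^1(I_{Z/X}\tensor L,\Oh_X)\isom\omega_Z\tensor(\omega_X\tensor L)^{-1}$, which is exactly the sheaf appearing in hypothesis (ii).

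Feeding this into the spectral sequence produces the five-term exact sequence
$$0\to\HH^1(X,L^{-1})\to\Ext^1_X(I_{Z/X}\tensor L,\Oh_X)\to\HH^0\big(X,\omega_Z\tensor(\omega_X\tensor L)^{-1}\big)\by{\partial}\HH^2(X,L^{-1}).$$
Hypothesis (i), namely $\HH^2(X,L^{-1})=0$, forces $\partial=0$, so every global section of the local Ext sheaf lifts to a genuine extension class; applying this to the $(r-1)$ globally generating sections furnished by (ii) produces an element of $\Ext^1_X(I_{Z/X}\tensor L,\Oh_X^{\oplus r-1})$ and hence an extension $E$ as in \eqref{same}. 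For uniqueness I would impose the extra hypothesis $\HH^1(X,L^{-1})=0$, which together with (i) makes the middle arrow an isomorphism, so the extension class, and therefore $E$, is determined by the chosen generating sections up to a unique isomorphism of extensions.

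The hard part, and the genuine content of the Hartshorne--Serre correspondence, is to verify that the coherent sheaf $E$ so constructed is in fact locally free of rank $r$. Away from $Z$ the sequence \eqref{same} exhibits $E$ as an extension of the line bundle $L$ by a trivial bundle, hence locally free, so the issue is entirely local along $Z$. Near a point of $Z$ I would choose coordinates in which $Z$ is defined by a regular sequence $f_1,f_2$, so that the local sheaf $\mathcal{E}xt^1(I_{Z/X}\tensor L,\Oh_X)$ is free of rank one along $Z$, generated by the Koszul class. The decisive point is that the global generation in (ii) forces the localized extension class to generate this $\mathcal{E}xt^1$ at \emph{every} point of $Z$; a direct computation with the Koszul complex then shows that such a generating class yields precisely the resolution making the middle term locally free of rank $r$. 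This is the generalization of the classical rank-two Serre construction to arbitrary rank, and it is exactly where hypothesis (ii) enters in an essential, pointwise, fashion.
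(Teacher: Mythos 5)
Your proposal is correct, and it is essentially the standard argument: the paper itself gives no proof of this statement (it is quoted directly from Arrondo's \emph{A home-made Hartshorne--Serre correspondence}, cited as \cite[Theorem 1]{arrondo}), and your sketch --- computing $\mathcal{E}xt^1(I_{Z/X}\tensor L,\Oh_X)\isom\omega_Z\tensor(\omega_X\tensor L)^{-1}$ from the Koszul resolution, lifting the $(r-1)$ generating sections through the five-term exact sequence using $\HH^2(X,L^{-1})=0$, checking local freeness of $E$ via pointwise generation of the local $\mathcal{E}xt^1$, and getting uniqueness from $\HH^1(X,L^{-1})=0$ --- is exactly the proof in that reference. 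No discrepancies to report.
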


\begin{remark}\label{cmpt}
{If $Z\subset X$ satisfies the assumptions of Theorem \ref{arr}, then by definition there is a surjection $$\mathcal{O}_{X}^{\oplus r-1}\longrightarrow \omega_Z\otimes (\omega_X\otimes L)^{-1}.$$ This is the same map that we obtain as connecting map by dualizing the exact sequence \eqref{same}.}
\end{remark}

\begin{remark} {We note the following that will be used without any further reference:
\begin{itemize}
    \item $Y\subset\mathbb{P}^N$ is ACM if and only if it is projectively normal and $H^i_*(Y,\mathcal{O}_Y)=0$ for all $1\leq i\leq \dim Y-1$.
    \item If $Y\subset\mathbb{P}^N$ is ACM and $X\in|\mathcal{O}_Y(d)|$ then $X\subset\mathbb{P}^N$ is ACM.
    \item Let $Y\subset\mathbb{P}^N$ be an ACM variety and $X\in|\mathcal{O}_Y(d)|$. Let $Z\subset X$ be a subvariety of codimension $2$. By the exact sequence
$$0\longrightarrow\Oh_{Y}(-d)\longrightarrow I_{Z/Y}\longrightarrow I_{Z/X}\longrightarrow 0,$$ we see that $Z\subset X$ is ACM if and only if $Z\subset Y$ is ACM.
\end{itemize}}
\end{remark}

{Recall that we say $Y\subset\mathbb{P}^N$ is {\it AG} (i.e. {\it arithmetically Gorenstein}) if it is ACM and subcanonical (i.e. $\omega_Y=\Oh_Y(s)$ for some $s\in\mathbb{Z}$). Let us now record the following useful}

\begin{prop}\label{serre}
{Let $Y\subset\mathbb{P}^N$ be a smooth AG variety and let $X \in \vert \Oh_Y(d) \vert$ be a smooth 
hypersurface and  assume that $\dim X=n\geq 3$. Let $e\in\mathbb{Z}$ and let $Z\subset X$ be an ACM local 
complete intersection subvariety of codimension $2$ for which the cohomology of $\omega_Z\otimes\omega_X^{-1}$ is 
generated in degree $-e$ by $(r-1)$ sections.} Then the associated vector bundle $E$ (coming from Theorem 
\ref{arr}) sitting in the exact sequence
\begin{equation}\label{defeq}
0 \longrightarrow \Oh_X^{\oplus r-1} \longrightarrow E \longrightarrow I_{Z/X}(e) \longrightarrow 0.
\end{equation}
is ACM. Moreover, $E$ is globally generated if and only if $I_{Z/X}(e)$ is globally generated.
\end{prop}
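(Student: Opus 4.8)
The plan is to prove the two assertions separately: first that $E$ has no intermediate cohomology, and then the global-generation equivalence by a short diagram chase. I begin by recording the inputs I will use repeatedly. Since $Y$ is AG it is ACM, so $X\in|\Oh_Y(d)|$ is ACM and $\HH^i(X,\Oh_X(a))=0$ for $1\le i\le n-1$ and all $a$; being a section of the subcanonical $Y$, $X$ is itself subcanonical, say $\omega_X=\Oh_X(w)$ (by adjunction, with $w=s+d$ if $\omega_Y=\Oh_Y(s)$). Since $Z\subset X$ is ACM of dimension $n-2$, the ideal-sheaf sequence gives $\HH^i(X,I_{Z/X}(a))=0$ for $1\le i\le n-2$, and $Z\subset\bbP^N$ is ACM, hence projectively normal. (These vanishings, together with $n\ge 3$ forcing $\HH^2(\Oh_X(-e))=\HH^1(\Oh_X(-e))=0$, also guarantee that Theorem \ref{arr} applies, so $E$ exists and is unique.) Now twisting \eqref{defeq} by $a$, for $1\le i\le n-2$ both outer groups $\HH^i(\Oh_X(a))^{\oplus r-1}$ and $\HH^i(I_{Z/X}(e+a))$ vanish, forcing $\HH^i(E(a))=0$; in particular $\HH^1(E(a))=0$ (this is where $n\ge 3$, i.e. $n-2\ge 1$, first enters). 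It remains only to kill the single top-intermediate group $\HH^{n-1}(E(a))$.

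For this I would pass to the dual bundle. By Serre duality on the smooth $X$, together with $\omega_X=\Oh_X(w)$, one has $\HH^{n-1}(E(a))\isom \HH^1(E^\vee(w-a))^\vee$; as $a$ ranges over $\bbZ$ so does $w-a$, so it suffices to prove $\HH^1(E^\vee(b))=0$ for all $b$. I emphasize that the subcanonical (Gorenstein) hypothesis is genuinely used here: it is what turns $\omega_X(-a)$ into an honest integer twist $\Oh_X(w-a)$, allowing me to feed it into a vanishing statement for $E^\vee(b)$. To compute $E^\vee$ I dualize \eqref{defeq} by $\mathcal{H}om(-,\Oh_X)$: since $E$ is locally free, $\mathcal{E}xt^1(E,\Oh_X)=0$, and since $Z$ is lci of codimension $2$ we have $\mathcal{H}om(I_{Z/X},\Oh_X)=\Oh_X$ and $\mathcal{E}xt^1(I_{Z/X},\Oh_X)\isom\mathcal{E}xt^2(\Oh_Z,\Oh_X)=\omega_Z\tensor\omega_X^{-1}$ by the fundamental local isomorphism. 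This yields
$$0\to \Oh_X(-e)\to E^\vee\to \Oh_X^{\oplus r-1}\by{\phi}\mathcal{G}(-e)\to 0,\qquad \mathcal{G}:=\omega_Z\tensor\omega_X^{-1},$$
where, by Remark \ref{cmpt}, $\phi$ is exactly the surjection determined by the $(r-1)$ generating sections $\sigma_1,\dots,\sigma_{r-1}\in\HH^0(Z,\mathcal{G}(-e))$ of the generation hypothesis. Splitting this into the two short exact sequences through $Q:=\ker\phi$ and chasing cohomology, the ACM vanishing of $\Oh_X$ (using $\HH^1(\Oh_X(b-e))=\HH^2(\Oh_X(b-e))=0$, again invoking $n\ge 3$) identifies $\HH^1(E^\vee(b))\isom\HH^1(Q(b))\isom\operatorname{coker}\bigl(\HH^0(\Oh_X(b))^{\oplus r-1}\to\HH^0(\mathcal{G}(b-e))\bigr)$. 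This last map is $(f_i)\mapsto\sum_i(f_i|_Z)\,\sigma_i$, i.e. the degree-$(b-e)$ piece of multiplication by the $\sigma_i$; using that restriction $\HH^0(\Oh_X(b))\onto\HH^0(\Oh_Z(b))$ is surjective (as $\HH^1(I_{Z/X}(b))=0$), the generation hypothesis says precisely that this map is surjective for every $b$. Hence $\HH^1(E^\vee(b))=0$, giving $\HH^{n-1}(E(a))=0$ and completing the ACM claim. I expect this paragraph to be the main obstacle: correctly dualizing \eqref{defeq} (the $\mathcal{E}xt$ computation and the identification of $\phi$ with multiplication by the $\sigma_i$ via Remark \ref{cmpt}) and matching the bookkeeping of twists so that ``generated in degree $-e$'' becomes surjectivity of $\phi$ on global sections.

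For the ``moreover'', one direction is immediate: the surjection $E\onto I_{Z/X}(e)$ in \eqref{defeq} shows that if $E$ is globally generated then so is $I_{Z/X}(e)$. Conversely, assume $I_{Z/X}(e)$ is globally generated. First I would note that $\HH^0(E)\onto\HH^0(I_{Z/X}(e))$ is surjective, since the obstruction $\HH^1(\Oh_X^{\oplus r-1})=0$. Therefore $0\to\HH^0(\Oh_X^{\oplus r-1})\to\HH^0(E)\to\HH^0(I_{Z/X}(e))\to 0$ is exact, and tensoring with $\Oh_X$ gives the top row of a commutative diagram with exact rows whose bottom row is \eqref{defeq} and whose vertical maps are the evaluation morphisms. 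The outer verticals are surjective because $\Oh_X^{\oplus r-1}$ and $I_{Z/X}(e)$ are globally generated, so by the four lemma the middle evaluation $\HH^0(E)\tensor\Oh_X\to E$ is surjective, i.e. $E$ is globally generated. This completes the plan.
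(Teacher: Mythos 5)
Your proposal is correct and follows essentially the same route as the paper: dualize \eqref{defeq} to the four-term sequence ending in $\omega_Z\tensor\omega_X^{-1}(-e)$, break it at the kernel of the map to that sheaf, use the generation-in-degree-$(-e)$ hypothesis (via Remark \ref{cmpt}) to kill $\HH^1_*$ of that kernel and hence of $E^\vee$, and conclude $\HH^{n-1}_*(X,E)=0$ by Serre duality, with the same evaluation-diagram chase for the global-generation equivalence. The only cosmetic difference is that you make the Serre-duality twist bookkeeping and the identification of $\phi$ with multiplication by the generating sections fully explicit, which the paper leaves more implicit.
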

\begin{proof}
Taking dual of \eqref{defeq} gives rise to the 4-term exact sequence
\begin{equation}\label{dualeq}
0 \longrightarrow \Oh_X(-e) \longrightarrow E^\vee \longrightarrow \Oh_X^{\oplus r-1} \longrightarrow \mathcal{E}xt^1_X(I_{Z/X}(e), \Oh_X) \longrightarrow 0.
\end{equation}
One has the identification $\mathcal{E}xt^1_X(I_{Z/X}, \omega_X) \isom \omega_Z$ using which  \eqref{dualeq} may be rewritten as 
\begin{equation}\label{dualeq1}
0 \longrightarrow \Oh_X(-e) \longrightarrow E^\vee \longrightarrow \Oh_X^{\oplus r-1} \longrightarrow \ell \longrightarrow 0
\end{equation}
where $\ell:=\omega_Z\tensor{\omega_X^{-1}}(-e)$. 
Also, by assumption, we have
\begin{equation}\label{surj}
\HH^0(X,\mathcal{O}_X(a)^{\oplus r-1})\longrightarrow \HH^0(Z,\ell(a))\textrm{ surjects for all } a\in\mathbb{Z}
\end{equation}
where the map above is induced by the map $\Oh_X^{\oplus r-1} \longrightarrow \ell$ in \eqref{dualeq1} (see Remark \ref{cmpt}).

Let
$E_1$ be the torsion-free sheaf defined as the
cokernel of the injection $\Oh_X(-e) \longrightarrow E^\vee$ in \eqref{dualeq1}. 
Breaking up the sequence \eqref{dualeq1}, we obtain the two short exact sequences
\begin{equation}\label{ses1}
0 \longrightarrow \Oh_X(-e) \longrightarrow E^\vee \longrightarrow E_1 \longrightarrow 0,
\end{equation}
\begin{equation}\label{ses2}
0 \longrightarrow E_1 \longrightarrow \Oh_X^{\oplus r-1} \longrightarrow \ell \longrightarrow 0.
\end{equation}
Recall that $\HH^i_*(X,\mathcal{O}_X)=0$ for $1\leq i\leq n-1$ as $X\subset\mathbb{P}^N$ is ACM (whence AG by adjunction). Passing to the cohomology of \eqref{ses2}, we conclude that $\HH^1_*(X,E_1)=0$ by \eqref{surj}. Consequently $\HH^1_*(X,E^{\vee})=0$ by \eqref{ses1} which by duality implies $\HH^{n-1}_*(X,E)=0$. It follows that $E$ is ACM since $\HH^i_*(X,E)=0$ for $1\leq i\leq n-2$ by \eqref{defeq}. To see the second assertion, consider the commutative diagram:
\[
\begin{tikzcd}
    0\arrow[r] & \HH^0(X,\Oh_X^{\oplus r-1})\otimes \Oh_X\arrow[r]\arrow[d] & H^0(X,E)\otimes \Oh_X\arrow[r]\arrow[d] & \HH^0(X,I_{Z/X}(e))\otimes\Oh_X\arrow[r]\arrow[d] & 0\\
    0\arrow[r] & \Oh_X^{\oplus r-1}\arrow[r] & E\arrow[r] & I_{Z/X}(e)\arrow[r] & 0.
\end{tikzcd}
\]
Since the left vertical map is surjective, it follows that the middle one is surjective if and only if the right one is so, whence the conclusion follows.
\end{proof}

As an useful consequence, we  deduce the following:
\begin{cor}\label{2gen}
Let the hypotheses be as in Proposition \ref{serre}. Then the multiplication map
$$\HH^0(Z, \ell(a))\tensor\HH^0(Z, \Oh_Z(b)) \longrightarrow \HH^0(Z, \ell(a+b))$$
is surjective whenever $a, b \geq 0$.
\end{cor}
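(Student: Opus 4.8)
The plan is to read the surjectivity off the two facts already established in the proof of Proposition \ref{serre}, namely the surjection \eqref{surj} coming from the sequence \eqref{ses2}, together with the projective normality of $X$. Recall that the hypothesis ``the cohomology of $\omega_Z\otimes\omega_X^{-1}$ is generated in degree $-e$ by $(r-1)$ sections'' means precisely that there are sections $t_1,\dots,t_{r-1}\in \HH^0(Z,\ell)$ (genuine degree-$0$ generators of $\ell$) inducing the surjection $\Oh_X^{\oplus r-1}\onto \ell$ of \eqref{ses2}, and that the resulting map on twisted global sections \eqref{surj} is surjective in every degree. This is the engine of the argument.

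First I would fix $a,b\geq 0$ and take an arbitrary $\sigma\in\HH^0(Z,\ell(a+b))$. Applying \eqref{surj} in degree $a+b$ lets me write
$$\sigma=\sum_{i=1}^{r-1}(f_i|_Z)\,t_i,\qquad f_i\in\HH^0(X,\Oh_X(a+b)),$$
where $f_i|_Z\in\HH^0(Z,\Oh_Z(a+b))$ is the restriction. Next, since $X$ is AG, hence ACM, hence projectively normal, the multiplication map $\HH^0(X,\Oh_X(a))\tensor\HH^0(X,\Oh_X(b))\to\HH^0(X,\Oh_X(a+b))$ is surjective for $a,b\geq 0$: projective normality gives the surjection $\HH^0(\bbP^{N},\Oh_{\bbP^{N}}(a))\onto\HH^0(X,\Oh_X(a))$ for every $a$, and polynomial multiplication on $\bbP^{N}$ is surjective in nonnegative degrees. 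Hence each $f_i$ factors as $f_i=\sum_j p_{ij}q_{ij}$ with $p_{ij}\in\HH^0(X,\Oh_X(a))$ and $q_{ij}\in\HH^0(X,\Oh_X(b))$. Finally I would regroup: setting $\alpha_{ij}:=(p_{ij}|_Z)\,t_i\in\HH^0(Z,\ell(a))$ and $\beta_{ij}:=q_{ij}|_Z\in\HH^0(Z,\Oh_Z(b))$, one gets
$$\sigma=\sum_{i,j}\alpha_{ij}\cdot\beta_{ij},$$
which exhibits $\sigma$ in the image of $\HH^0(Z,\ell(a))\tensor\HH^0(Z,\Oh_Z(b))\to\HH^0(Z,\ell(a+b))$, as desired.

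The only point demanding care, and the sole place where the precise hypothesis enters, is that the generators $t_i$ genuinely lie in $\HH^0(Z,\ell)$ in degree $0$, so that after absorbing $p_{ij}|_Z$ each factor $\alpha_{ij}$ is a bona fide section of $\ell(a)$ rather than of some twist one cannot control; this is exactly what ``generated in degree $-e$'' supplies. Everything else is formal, so I do not anticipate any substantial obstacle beyond correctly tracking degrees.
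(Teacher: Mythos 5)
Your proof is correct and rests on exactly the same two ingredients as the paper's: the degree-wise surjection \eqref{surj} supplied by the ``generated in degree $-e$'' hypothesis, and projective normality coming from the ACM setup. The only (immaterial) difference is that you factor the degree-$(a+b)$ coefficient sections upstairs on $X$ and then restrict, whereas the paper first reduces to the surjectivity of $\HH^0(Z,\Oh_Z(a))\tensor\HH^0(Z,\Oh_Z(b))\to\HH^0(Z,\Oh_Z(a+b))$ and deduces that from the projective normality of $Y$ together with the ACM property of $Z\subset Y$.
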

\begin{proof}
Thanks to \eqref{surj} (and the fact that $\HH^0(X,\Oh_X(m))\longrightarrow\HH^0(Z,\Oh_Z(m))$ is surjective for all $m$), it is enough to check that 
$$\HH^0(Z, \Oh_Z(a))\tensor\HH^0(Z, \Oh_Z(b)) \longrightarrow \HH^0(Z, \Oh_Z(a+b))$$
is surjective whenever $a, b \geq 0$. For this, we note that we have a commutative
diagram:
\begin{equation}\label{multiplicationmap}
\begin{tikzcd}
\HH^0(Y, \Oh_{Y}(a))\tensor\HH^0(Y, \Oh_{Y}(b)) \arrow[r]\arrow[d]  & \HH^0(Y, \Oh_{Y}(a+b))\arrow[d] \\
\HH^0(Z, \Oh_Z(a))\tensor\HH^0(Z, \Oh_Z(b))\arrow[r] & \HH^0(Z, \Oh_Z(a+b))
\end{tikzcd}
\end{equation}
The horizontal map on the top row is a surjection as $Y\subset\mathbb{P}^N$ is ACM (hence projectively normal), and the vertical maps are surjective since $Z\subset Y$ is ACM.
It follows that the bottom horizontal map is also a surjection.
\end{proof}

In what follows, we use the results of this section when $Y=\mathbb{P}^{n+1}$, $n\geq 3$, $X\subset\mathbb{P}^{n+1}$ is a smooth hypersurface and $L=\mathcal{O}_X(e)$. Note that in this case, we have 
\begin{equation*}
    \ell:=\omega_Z\otimes(\omega_X\otimes L)^{-1}=\omega_Z(n+2-d-e)
\end{equation*} by adjunction.

\section{Equivalent characterizations of extendability}

In this section, we prove one of the central results that we use to prove our three main theorems. This result and the following corollary are probably well-known to the experts; they had been implicitly used in various articles of the first author, but had not been stated in this explicit form before.

\begin{lemma}\label{l3}
Let $X\subset\mathbb{P}^{n+1}$ be a smooth hypersurface of degree $d$, and let $Z\subset 
X$ be a codimension 2 {local complete intersection} subvariety defined by the exact sequence $$0\longrightarrow
\Oh_X^{\oplus r-1}\longrightarrow E\longrightarrow I_{Z/X}(e)\longrightarrow 0$$ where $E$ is a bundle of rank $r$.
\begin{enumerate}
    \item If $E$ is a direct sum of line bundles, then $Z$ is extendable.
    \item If $Z$ is extendable then the normal bundle sequence 
    \begin{equation}\label{nbsa}
        0\longrightarrow N_{Z/X}\longrightarrow N_{Z/\mathbb{P}^{n+1}}\longrightarrow\Oh_Z(d)\longrightarrow 0
    \end{equation} splits.
\item If the normal bundle sequence \eqref{nbsa} for the inclusions $Z \subset X \subset \bbP^{n+1}$  splits, then there exists a subscheme $Z_{(1)}\supset Z$ in the first-order thickening $X_{(1)}$ of the hypersurface $X$ in
$\bbP^{n+1}$ such that the following sequence is exact 
$$I_{Z_{(1)}/X_{(1)}}(-d)\xrightarrow[]{\times f} I_{Z_{(1)}/X_{(1)}}\longrightarrow I_{Z/X}\longrightarrow 0.$$ 
Furthermore $fI_{Z_{(1)}/X_{(1)}}(-d)= I_{Z/X}(-d)$.
\end{enumerate}    
\end{lemma}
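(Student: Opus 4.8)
The three assertions are logically independent implications, and I would prove each by a different technique: (1) by lifting sections and forming a degeneracy locus, (2) by comparing conormal bundles along $Z\subset\Sigma$, and (3) via the deformation theory of the square-zero extension $X\subset X_{(1)}$.

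For (1), write $E=\bigoplus_{i=1}^r\Oh_X(a_i)$. The inclusion $\Oh_X^{\oplus r-1}\to E$ is given by an $r\times(r-1)$ matrix whose entries are sections $s_{ji}\in\HH^0(X,\Oh_X(a_i))$, and $I_{Z/X}$ is locally generated by its maximal minors. Since $X\subset\bbP^{n+1}$ is a hypersurface, hence projectively normal, each restriction map $\HH^0(\bbP^{n+1},\Oh_{\bbP^{n+1}}(a_i))\to\HH^0(X,\Oh_X(a_i))$ is surjective, so I would lift every entry to $\tilde s_{ji}$ and let $\Sigma\subset\bbP^{n+1}$ be the degeneracy locus of the lifted map $\Oh_{\bbP^{n+1}}^{\oplus r-1}\to\bigoplus_i\Oh_{\bbP^{n+1}}(a_i)$, i.e. the zero scheme of its maximal minors. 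Restricting these minors to $X$ recovers the generators of $I_{Z/X}$, so $X\cap\Sigma=Z$ scheme-theoretically. A dimension count shows $\Sigma$ is pure of codimension $2$: no component can lie in $X$ (it would then lie in $Z$, of strictly smaller dimension), so every component meets $X$ properly and $\Sigma\cap X=Z$ forces codimension exactly $2$; being a degeneracy locus of expected codimension, $\Sigma$ is Cohen–Macaulay (Eagon–Northcott), hence without embedded components. Thus $Z$ is extendable.

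For (2), extendability gives a pure codimension $2$ subscheme $\Sigma\subset\bbP^{n+1}$ with $I_{Z/\bbP^{n+1}}=(f)+I_\Sigma$, where $f$ is the degree $d$ equation of $X$. Dualizing \eqref{nbsa}, it suffices to split $0\to\Oh_Z(-d)\to N^\vee_{Z/\bbP^{n+1}}\to N^\vee_{Z/X}\to 0$, where the injection sends a generator to the class of $f$. I would use $Z\subset\Sigma$: the conormal map $(I_\Sigma/I_\Sigma^2)|_Z\to N^\vee_{Z/\bbP^{n+1}}$ followed by projection to $N^\vee_{Z/X}$ is surjective, because $I_{Z/X}$ is the image of $I_\Sigma$ in $\Oh_X$; the plan is to show this composite is an isomorphism, which yields the splitting. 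The main obstacle, and the technical heart of the argument, is that $\Sigma$ is only assumed pure of codimension $2$ and need not be a local complete intersection globally; I would show it nevertheless is one near $Z$. At $p\in Z$ one has $I_{Z,p}=(f,g_1,g_2)$ a regular sequence; lifting generators of $I_{Z/X,p}=(\bar g_1,\bar g_2)$ to $h_1,h_2\in I_{\Sigma,p}$, the fact that $f$ is a nonzerodivisor (so $f,h_1,h_2$, and hence $h_1,h_2$, form a regular sequence) together with a Nakayama argument using that $X\cap\Sigma=Z$ is reduced shows $I_{\Sigma,p}=(h_1,h_2)$. Then $N^\vee_{\Sigma/\bbP^{n+1}}|_Z$ is locally free of rank $2=\operatorname{rank}N^\vee_{Z/X}$, the surjection above is an isomorphism, and \eqref{nbsa} splits.

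For (3), I would read the splitting as the vanishing of a deformation obstruction. The inclusion $X\subset X_{(1)}$ is a square-zero extension with ideal $\Oh_X(-d)\cong N^\vee_{X/\bbP^{n+1}}$, fitting in $0\to\Oh_X(-d)\xrightarrow{\times f}\Oh_{X_{(1)}}\to\Oh_X\to 0$. By the standard deformation theory of closed subschemes across a square-zero extension, the obstruction to lifting $Z\subset X$ to a subscheme $Z_{(1)}\subset X_{(1)}$ restricting to $Z$ is precisely the extension class of \eqref{nbsa} in $\Ext^1_Z(\Oh_Z(d),N_{Z/X})$; since \eqref{nbsa} splits, this class vanishes and $Z_{(1)}$ exists. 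Finally I would deduce the exact sequence directly: as $f^2=0$ on $X_{(1)}$, multiplication by $f$ factors through $\Oh_X$ and identifies $f\Oh_{X_{(1)}}$ with $\Oh_X(-d)$; the restriction $I_{Z_{(1)}/X_{(1)}}\to I_{Z/X}$ is surjective by construction of $Z_{(1)}$, its kernel is $I_{Z_{(1)}/X_{(1)}}\cap f\Oh_{X_{(1)}}$, and flatness of the lift forces this kernel to equal $\operatorname{im}(\times f)=fI_{Z_{(1)}/X_{(1)}}(-d)$. Moreover $fI_{Z_{(1)}/X_{(1)}}(-d)=f\cdot I_{Z/X}(-d)=I_{Z/X}(-d)$ under the identification above, yielding both the exactness and the supplementary equality. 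I expect step (2) to be the genuine obstacle, since it is there that the scheme-theoretic hypothesis $X\cap\Sigma=Z$ must be used in an essential way to control the conormal sheaf of the possibly singular $\Sigma$.
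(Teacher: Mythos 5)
Your proposal is correct, and parts of it diverge from the paper in instructive ways. Part (1) is essentially the paper's argument: lift the splitting $\Oh_X^{\oplus r-1}\to\bigoplus\Oh_X(a_i)$ to $\bbP^{n+1}$, check $Z=X\cap\Sigma$ scheme-theoretically by the local identity $I=J+(f)$, and deduce purity from the absence of a divisorial component plus the determinantal structure. Part (3) is outsourced by the paper to a citation of \cite[Lemma 2.2]{MPRV}; your obstruction-theoretic sketch is consistent with that reference, though the appeal to ``flatness of the lift'' at the end is the one place you are waving your hands rather than computing. The genuine divergence is in part (2). The paper splits the \emph{other} factor: since $Z=\Sigma\cap X$ is a Cartier divisor on $\Sigma$ cut out by $f$, one has the inclusion $\Oh_Z(d)\isom N_{Z/\Sigma}\into N_{Z/\bbP^{n+1}}$, and it suffices to check that its composition with the projection onto $N_{X/\bbP^{n+1}}|_Z\isom\Oh_Z(d)$ is a surjection (hence an isomorphism) of line bundles, which is done entirely with ideal-sheaf diagrams. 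Because $N_{Z/\Sigma}$ is automatically a line bundle whatever the singularities of $\Sigma$, no control of $\Sigma$ beyond purity is needed. You instead try to section the projection $N^\vee_{Z/\bbP^{n+1}}\onto N^\vee_{Z/X}$ via $(I_\Sigma/I_\Sigma^2)|_Z$, which forces you to prove that $\Sigma$ is a local complete intersection along $Z$. Your Nakayama argument does go through --- the real input is that $f$ is a nonzerodivisor on $\Oh_\Sigma$ near $Z$ (every associated prime of the unmixed codimension-$2$ scheme $\Sigma$ would otherwise lie in $X$ and hence in the codimension-$3$ subvariety $Z$), which gives $(f)\cap I_{\Sigma,p}=fI_{\Sigma,p}$ and then $I_{\Sigma,p}=(h_1,h_2)$; note that the reducedness of $Z$, which you invoke, is not what is actually used. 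So your route is correct but strictly more laborious; the paper's choice of which factor to split off is precisely what lets it avoid saying anything about the local structure of $\Sigma$.
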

\begin{proof}
(1) Since $E$ splits into a sum of line bundles, the
map $$\Oh_X^{\oplus r-1} \longrightarrow E\isom \bigoplus_{i=1}^r \Oh_X(a_i)$$ lifts to a map 
$$\Oh_{\bbP^{n+1}}^{\oplus r-1} \longrightarrow  \bigoplus_{i=1}^r \Oh_{\bbP^{n+1}}(a_i).$$
The cokernel of this map is (a twist of) the ideal sheaf of a codimension 2 subscheme $\Sigma\subset\mathbb{P}^{n+1}$ which satisfies the condition that $Z=X \cap {\Sigma}$ scheme-theoretically. {Indeed, arguing locally, suppose $X=\textrm{Spec}(A/(f))$ and let the defining ideals of $\Sigma\subset\mathbb{P}^{n+1}$ and $Z\subset\mathbb{P}^{n+1}$ be $J\subset A$ and $I\subset A$ respectively. Now, $(f) \subset I$ as $Z\subset X$. Moreover, $I_{\Sigma/\mathbb{P}^{n+1}}$ restricts to $I_{Z/X}$ which implies $\pi(I)=\pi(J)$ where $\pi:A\longrightarrow A/(f)$ is the natural map. This implies $I=J+ (f)$, or equivalently, $Z=X\cap\Sigma$.} This shows in particular that $\Sigma$ doesn't have a divisorial component. Since $\textrm{codim}_{\mathbb{P}^{n+1}}(\Sigma)\leq 2$ (see for e.g. \cite[Lemma 2.7]{Ott}), we conclude that $\Sigma$ is of pure codimension 2, whence $Z$ is extendable.

(2) Since $Z$ is extendable, $Z=\Sigma\,\cap\,X$ for some pure codimension $2$ subscheme $\Sigma$ in $\bbP^{n+1}$. Hence, $Z \in \vert\Oh_\Sigma(d)\vert$, and so
we have an inclusion 
$$\Oh_Z(d) \isom N_{Z/\Sigma} \into N_{Z/\bbP^{n+1}}.$$ 
 We claim that this inclusion composed with the
surjection 
$$N_{Z/\bbP^{n+1}} \onto N_{X/\bbP^{n+1}\vert Z} \isom \Oh_Z(d)$$ yields a splitting of the 
normal bundle $N_{Z/\bbP^{n+1}}$ and hence the normal bundle sequence \eqref{nbsa} splits.
To verify the claim, we start with the diagram:
\begin{equation*}
    \begin{tikzcd}
    &                                             & 0 \arrow[d] & & \\
                    &                                             & {I}_{X/\bbP^{n+1}} \isom\Oh_{\bbP^{n+1}}(-d) \arrow[d] & & \\
        0 \arrow[r] & {I}_{\Sigma/\bbP^{n+1}} \arrow[r] & {I}_{Z/\bbP^{n+1}} \arrow[r]\arrow[d] &
        {I}_{Z/\Sigma} \isom \Oh_\Sigma(-d)\arrow[r] & 0 \\
        &                                             & {I}_{Z/X} \arrow[d] & & \\
        &                                             & 0  & & 
    \end{tikzcd}
\end{equation*}
Here the vertical sequence is the sequence of ideal sheaves for the inclusions $Z \subset X \subset \bbP^{n+1}$, while the horizontal
sequence is for the inclusions $Z\subset \Sigma \subset \bbP^{n+1}$. We also have from the local description in the proof of $(1)$ above
that the composite map $${I}_{\Sigma/\bbP^{n+1}} \longrightarrow {I}_{Z/\bbP^{n+1}} \longrightarrow {I}_{Z/X}$$ is surjective.


We will now show that the map ${I}_{X/\bbP^{n+1}} \longrightarrow{I}_{Z/\Sigma}$ is surjective and that it is indeed 
the natural restriction map $\Oh_{\bbP^{n+1}}(-d) \longrightarrow \Oh_\Sigma(-d)$. For this, we complete the diagram above:
\begin{equation*}
    \begin{tikzcd}
    &                  0 \arrow[d]         & 0 \arrow[d] & & \\
                    &  {I}_{\Sigma/\bbP^{n+1}}(-d)\arrow[d] \arrow[r] & {I}_{X/\bbP^{n+1}}  \arrow[d] & & \\
        0 \arrow[r] & {I}_{\Sigma/\bbP^{n+1}}\arrow[d] \arrow[r] & {I}_{Z/\bbP^{n+1}} \arrow[r]\arrow[d] &
        {I}_{Z/\Sigma} \arrow[r] & 0 \\
        &  I_{Z/X} \arrow[r, equal]\arrow[d]                                           & {I}_{Z/X} \arrow[d] & & \\
        &       0                                      & 0  & & 
    \end{tikzcd}
\end{equation*}
It follows from the snake lemma applied to the two vertical short exact sequences that the map 
${I}_{X/\bbP^{n+1}} \longrightarrow {I}_{Z/\Sigma}$ is surjective.

To complete the proof of the claim, we consider the diagram obtained  by various quotient maps $\mathcal{I} \longrightarrow \mathcal{I}/\mathcal{I}^2$
from an ideal of a subvariety to its conormal sheaf: 
\begin{equation*}
    \begin{tikzcd}
        {I}_{X/\bbP^{n+1}} \arrow[hook]{r}\arrow[two heads]{d} & {I}_{Z/\bbP^{n+1}}\arrow[two heads]{r}
        \arrow[two heads]{d} & {I}_{Z/\Sigma}\arrow[two heads]{d} \\
        N^{\vee}_{X/\bbP^{n+1}} \arrow[r] & N^\vee_{Z/\bbP^{n+1}} \arrow[r] & N^\vee_{Z/\Sigma} 
    \end{tikzcd}
\end{equation*}
The composition of the top horizontal arrows is a surjection as noted in the above paragraph. 
Since the right most vertical arrow
is a surjection, this means that the composition of the bottom horizontal arrows is also a surjection. Consequently, the composition
$$N^{\vee}_{X/\bbP^{n+1}}\tensor\Oh_Z \longrightarrow N^\vee_{Z/\bbP^{n+1}} \longrightarrow  N^\vee_{Z/\Sigma} $$
is also a surjection, and hence an isomorphism since they are both isomorphic to $\Oh_Z(-d)$.

(3) This is \cite[Lemma 2.2]{MPRV}.
\end{proof}
The key to our analysis is the following (although we will not use (4) in the sequel):
\begin{cor}\label{mprvcor}
Assume in the situation of Lemma \ref{l3} that $Z\subset X$ (equivalently $E$ when the conditions of Proposition \ref{serre} are satisfied) is ACM. 
Then the following are equivalent:
\begin{enumerate}
\item $E$ is a direct sum of line bundles.
\item $Z$ is extendable.
\item The normal bundle sequence \eqref{nbsa} for the inclusions $Z \subset X \subset \bbP^{n+1}$  splits.
\item There exists a subscheme $Z_{(1)}\supset Z$ in the first-order thickening $X_{(1)}$ of the hypersurface $X$ in
$\bbP^{n+1}$ such that the following sequence is exact 
$$I_{Z_{(1)}/X_{(1)}}(-d)\stackrel{f}\longrightarrow I_{Z_{(1)}/X_{(1)}}\longrightarrow I_{Z/X}\longrightarrow 0.$$
Furthermore, $fI_{Z_{(1)}/X_{(1)}}(-d)= I_{Z/X}(-d)$.
\end{enumerate}
\end{cor}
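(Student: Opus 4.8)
The plan is to close the cycle of implications. By Lemma \ref{l3}, parts (1)--(3) already give $(1)\Rightarrow(2)\Rightarrow(3)\Rightarrow(4)$ for any smooth hypersurface, and none of these uses the ACM hypothesis. Since $1\to 2\to 3\to 4$ is a total order, it suffices to produce a single arrow back from $(4)$ to $(1)$, and this is exactly where the assumption that $Z$ (equivalently $E$) is ACM must enter.

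I would factor $(4)\Rightarrow(1)$ into two steps. First, $(4)\Rightarrow(2)$: the passage from infinitesimal extendability (existence of $Z_{(1)}\subset X_{(1)}$) to honest extendability for an ACM codimension-two subvariety is precisely the theorem of \cite{MPRV} recalled in the introduction, which I would simply invoke. Second --- and this is the genuinely new point --- I would prove $(2)\Rightarrow(1)$: assuming $Z=X\cap\Sigma$ for a pure codimension-two subscheme $\Sigma\subset\mathbb{P}^{n+1}$, I would lift $E$ to a vector bundle on all of $\mathbb{P}^{n+1}$ and split it by Horrocks' criterion. Concretely, apply the Hartshorne--Serre correspondence (Theorem \ref{arr}) to $\Sigma$ with $L=\mathcal{O}_{\mathbb{P}^{n+1}}(e)$: hypothesis (i) holds since $H^2(\mathbb{P}^{n+1},\mathcal{O}(-e))=0$, and for hypothesis (ii) one observes that, by adjunction, $\omega_\Sigma\otimes\omega_{\mathbb{P}^{n+1}}^{-1}(-e)$ restricts on $Z$ to $\ell=\omega_Z\otimes\omega_X^{-1}(-e)$ (the two copies of $\mathcal{O}(d)$ cancel), which is generated by $r-1$ sections by hypothesis. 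This produces a rank-$r$ bundle $\widetilde{E}$ on $\mathbb{P}^{n+1}$.

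Next I would identify $\widetilde{E}|_X\cong E$. Restricting the defining sequence of $\widetilde{E}$ to $X$, the relevant Tor-sheaves vanish because $X=\{f=0\}$ is a divisor meeting $\Sigma$ properly (so $f$ is a nonzerodivisor on $\mathcal{O}_\Sigma$ and $\mathcal{O}_X$ has a length-one resolution), whence $I_\Sigma|_X=I_{Z/X}$ and the restricted sequence $0\to\mathcal{O}_X^{\oplus r-1}\to\widetilde{E}|_X\to I_{Z/X}(e)\to 0$ stays exact; the uniqueness clause of Theorem \ref{arr} over $X$ (valid since $H^1(X,\mathcal{O}_X(-e))=0$ as $X$ is ACM) then forces $\widetilde{E}|_X\cong E$. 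Granting that $\widetilde{E}$ is ACM on $\mathbb{P}^{n+1}$, Horrocks' criterion yields that $\widetilde{E}$ is a direct sum of line bundles, and hence so is its restriction $E$, giving $(1)$.

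The main obstacle is establishing that $\widetilde{E}$ is ACM, i.e. $H^i_*(\mathbb{P}^{n+1},\widetilde{E})=0$ for $1\le i\le n$. Restricting along $0\to\widetilde{E}(-d)\to\widetilde{E}\to E\to 0$ and using $H^i_*(X,E)=0$ for $1\le i\le n-1$ kills the intermediate range $2\le i\le n-1$ by the usual periodicity-plus-Serre-vanishing argument, but the two boundary cases are delicate. They are governed, respectively, by the projective normality and arithmetic Cohen--Macaulayness of $\Sigma$ (for $i=1$) and, via Serre duality on $\mathbb{P}^{n+1}$ exactly as in the proof of Proposition \ref{serre}, by the requirement that the cohomology of $\omega_\Sigma\otimes\omega_{\mathbb{P}^{n+1}}^{-1}$ be generated in degree $-e$ by $r-1$ sections (for $i=n$). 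The latter I expect to transfer from $Z$ to $\Sigma$ by a descending induction on the twist, lifting generators through the maps $\mathcal{O}(a-d)^{\oplus r-1}\xrightarrow{f}\mathcal{O}(a)^{\oplus r-1}$ and $\ell'(a-d)\to\ell'(a)$, \emph{once $\Sigma$ is known to be ACM}. Thus the real difficulty is the lifting of the ACM property from the hypersurface section $Z$ to $\Sigma$ itself; this is not a formal cohomological consequence of $Z$ being ACM, and I expect to need either the generality of $X$ or the full force of the \cite{MPRV} construction to secure it.
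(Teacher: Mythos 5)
Your reduction to a single return arrow is sound, and your observation that Lemma \ref{l3} gives $(1)\Rightarrow(2)\Rightarrow(3)\Rightarrow(4)$ without the ACM hypothesis matches the paper. But your proof of the return arrow has a genuine gap, and it is precisely the one you flag yourself. In the step $(2)\Rightarrow(1)$ you need (a) Theorem \ref{arr} to apply to $\Sigma$, which requires $\Sigma$ to be a local complete intersection --- all you know is that $\Sigma$ is a pure codimension $2$ subscheme with $X\cap\Sigma=Z$, and lci-ness of $\Sigma$ (even along $Z$, let alone away from it) does not follow from $Z$ being lci in $X$; and (b), more seriously, the ACM-ness of $\widetilde{E}$, equivalently of $\Sigma$. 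The exact sequence $0\to I_{\Sigma}(-d)\xrightarrow{f} I_{\Sigma}\to I_{Z/X}\to 0$ together with $H^i_*(I_{Z/X})=0$ only gives surjectivity of $H^i(I_{\Sigma}(k-d))\to H^i(I_{\Sigma}(k))$, which propagates vanishing \emph{upward} in the twist; to conclude anything you would need $H^i(\bbP^{n+1},I_{\Sigma}(k))=0$ for $k\ll 0$ throughout $2\le i\le n-1$, and that is itself a depth condition on the cone over $\Sigma$ --- essentially the statement you are trying to prove. (A posteriori $\Sigma$ may be taken ACM, but only because $E$ splits, which is the desired conclusion.) So the argument is circular at its crux, and deferring to ``the full force of \cite{MPRV}'' is not a repair one can gesture at: it is the entire content of the implication. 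Citing \cite{MPRV} wholesale for $(4)\Rightarrow(2)$ is also slippery, since the paper presents this very corollary as the precise form of that result.

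The paper closes the cycle differently, proving $(4)\Rightarrow(1)$ directly and never leaving $X$. It takes a resolution $0\to G\to F\to I_{Z/X}\to 0$ with $F$ a direct sum of line bundles, $H^0_*(X,F)\onto H^0_*(X,I_{Z/X})$, and $G$ ACM; hypothesis $(4)$ lets $G$ extend to the first-order thickening $X_{(1)}$ by \cite[Lemma 2.3]{MPRV}, whence $G$ is a direct sum of line bundles by \cite[Proposition 2.4]{MPRV}. Lifting $F\to I_{Z/X}$ to $F\to E(-e)$ (possible since $H^1(X,F^\vee(-e))=0$) and applying the snake lemma produces an extension $0\to G\to F\oplus\Oh_X(-e)^{\oplus r-1}\to E(-e)\to 0$, which splits because $\Ext^1(E(-e),G)=0$ ($E$ is ACM and $G$ is split); hence $E$ is a direct summand of a split bundle and therefore split. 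If you want to keep your two-step architecture, replace your Horrocks argument for $(2)\Rightarrow(1)$ by this one, or simply route $(2)\Rightarrow(3)\Rightarrow(4)$ through Lemma \ref{l3} and prove only $(4)\Rightarrow(1)$ as above.
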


\begin{proof}
Indeed, by Lemma \ref{l3}, all that remains to be proved is that $(4)$ implies $(1)$ under the ACM hypothesis. We
recall from \cite[Section 2]{MPRV} that there exists a short exact sequence 
\begin{equation}\label{prevex1}
    0\longrightarrow G\longrightarrow F\longrightarrow I_{Z/X}\longrightarrow 0
\end{equation}
such that
\begin{itemize}
    \item  $F$ is a direct sum of line bundles, 
    \item $\HH^0_*(X,F)\longrightarrow \HH^0_*(X,I_{Z/X})$ is surjective, and 
    \item $G$ is ACM.
\end{itemize}  
By \cite[Lemma 2.3]{MPRV}, $G$ extends to a bundle $\mathcal{G}$ on $X_{(1)}$. Applying \cite[Proposition 2.4]{MPRV}, we see that 
$G$ is a direct sum of line bundles. 
Since $F$ is a direct sum of line bundles, we conclude that the map $$\HH^0(X,F^{\vee}\otimes E(-e))\longrightarrow \HH^0(X,F^{\vee}\otimes I_{Z/X})$$ induced by the exact sequence 
\begin{equation}\label{prev'}
    0 \longrightarrow \Oh_X(-e)^{\oplus r-1} \longrightarrow E(-e) \longrightarrow I_{Z/X} \longrightarrow 0 
\end{equation}
is surjective as $\HH^1(X,F^{\vee}\otimes\Oh_X(-e))=0$. Thus the map $F\longrightarrow I_{Z/X}$ in \eqref{prevex1} lifts to a map $F\longrightarrow E(-e)$. Consequently, defining $$\widetilde{F}:=F\oplus \Oh_X(-e)^{\oplus r-1}$$ and using snake lemma, we obtain the following diagram with exact rows and columns:
\begin{equation}\label{big1}
\begin{tikzcd}
    & & 0\arrow[d] & 0\arrow[d] &\\
    & & \Oh_X(-e)^{\oplus r-1}\arrow[d]\arrow[r,equal] & \Oh_X(-e)^{\oplus r-1}\arrow[d]\\
    0\arrow[r] & G\arrow[r]\arrow[d, equal] & \widetilde{F}\arrow[d]\arrow[r] & E(-e)\arrow[d]\arrow[r] & 0\\
    0\arrow[r] & G\arrow[r] & F\arrow[r]\arrow[d] & I_{Z/X}\arrow[r]\arrow[d] & 0\\
    & & 0 & 0 &
\end{tikzcd}
\end{equation}
Since $G$ is a direct sum of line bundles, we obtain $\textrm{Ext}^1(E(-e),G)=0$ (as $E$ is ACM). Consequently the middle row of \eqref{big1} is split. Since $\widetilde{F}$ is a direct sum of line bundles, so is $E$.
\end{proof}

\section{Surjectivity via Green's theorem}\label{green}
{\color{black}
We now proceed to prove the main technical result that is needed in the proof of Theorem \ref{mainthm1}. 
Throughout this section unless stated otherwise, $X\subset \mathbb{P}^5$ is a general hypersurface of degree $d$, and $Z\subset X$ is an 
ACM local complete intersection surface. 
We also assume that
\begin{itemize}
    \item[(A)] {$I_{Z/X}(e+1)$ is globally generated, and}
    \item[(B)] there is a smooth member
$\Theta \in \vert I_{Z/X}(e)\vert$ (in particular $e\geq 1$).
\end{itemize} 

By assumption $(B)$, we have inclusions
$$Z \subset \Theta \subset X \subset \bbP^5$$ 
and the corresponding normal bundle sequence
\begin{equation}\label{nbs1}
0 \longrightarrow N_{Z/\Theta} \longrightarrow N_{Z/X} \longrightarrow 
N_{{\Theta/X}|Z} \longrightarrow 0.
\end{equation}
Since $N_{\Theta/X} \isom \Oh_\Theta(e)$, taking determinants, we have the identification 
$$N_{Z/\Theta}  \isom  \det{N_{Z/X}}\tensor\Oh_Z(-e)\isom \omega_Z\tensor{\omega_\Theta}^{-1} =  \ell$$
whence the normal bundle sequence in \eqref{nbs1} may be rewritten as
\begin{equation}\label{nbs2}
0 \longrightarrow \ell \longrightarrow N_{Z/X} \longrightarrow \Oh_Z(e) \longrightarrow 0.
\end{equation}             
Taking cohomology, we get the sequence
$$0 \longrightarrow\HH^0(Z, \ell) \longrightarrow \HH^0(Z, N_{Z/X}) {\overset{\alpha}{\longrightarrow}} \HH^0(Z, \Oh_Z(e)) \longrightarrow \cdots .$$

Setting $W:= \textrm{Image}(\alpha)$, we have an exact sequence
\begin{equation*}
0 \longrightarrow \HH^0(Z, \ell) \longrightarrow \HH^0(Z, N_{Z/X}) \longrightarrow W \longrightarrow 0.
\end{equation*}
More generally, twisting \eqref{nbs2} with $\Oh_Z(b)$ for any  $b\in\bbZ$, we also have exact sequences
\begin{equation}\label{coh_nbsz}
0 \longrightarrow \HH^0(Z, \ell(b)) \longrightarrow \HH^0(Z, N_{Z/X}(b)) \longrightarrow W_{b+e} \longrightarrow 0,
\end{equation}
where $$W_{b+e}:= \textrm{Image}\left[~\HH^0(Z, N_{Z/X}(b)) \longrightarrow\HH^0(Z, \Oh_Z(b+e))~\right].$$
Evidently $W=W_e$ in the above notation.
}
{\color{black}
\begin{lemma}\label{nisgg}
The vector spaces $W_{b+e}$ for $b > 0$ are base point free linear subsystems of the space of global sections
$\HH^0(Z, \Oh_Z(b+e))$.                                                                    
\end{lemma}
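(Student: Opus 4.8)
The plan is to upgrade the statement by showing that the map $\HH^0(Z,N_{Z/X}(b)) \to \HH^0(Z,\Oh_Z(b+e))$ appearing in \eqref{coh_nbsz} is actually \emph{surjective} for every $b$, so that $W_{b+e}$ is not merely a subsystem but the \emph{complete} linear system $\HH^0(Z,\Oh_Z(b+e))$. Once this is known, base point freeness reduces to the global generation of the line bundle $\Oh_Z(b+e)$, which is immediate for $b>0$. So the first thing I would do is identify the obstruction to surjectivity: from the long exact cohomology sequence of \eqref{nbs2} twisted by $\Oh_Z(b)$ one has
$W_{b+e} = \ker\!\big(\partial\colon \HH^0(Z,\Oh_Z(b+e)) \to \HH^1(Z,\ell(b))\big)$, and hence $W_{b+e} = \HH^0(Z,\Oh_Z(b+e))$ as soon as $\HH^1(Z,\ell(b))=0$. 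This reduces the entire statement to a single cohomology vanishing.

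The key step, and the only one with real content, is the vanishing $\HH^1(Z,\ell(b))=0$. Here I would rewrite $\ell$ in terms of $\omega_Z$: since $\ell = \omega_Z\tensor\omega_X^{-1}(-e)$ and $\omega_X=\Oh_X(d-6)$ for a hypersurface $X\subset\bbP^5$, restriction to $Z$ gives $\omega_X|_Z=\Oh_Z(d-6)$ and therefore $\ell(b)=\omega_Z\tensor\Oh_Z(b-d-e+6)$. As $Z$ is a local complete intersection it is Gorenstein, so $\omega_Z$ is a line bundle and Serre duality on the surface $Z$ applies: $\HH^1(Z,\ell(b))$ is dual to $\HH^1(Z,\omega_Z\tensor\ell(b)^{-1})=\HH^1(Z,\Oh_Z(d+e-b-6))$. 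This last group vanishes because $Z$ is ACM. Indeed, $Z\subset X$ being ACM is equivalent to $Z\subset\bbP^5$ being ACM, and the latter forces $\HH^i_*(Z,\Oh_Z)=0$ for $1\le i\le \dim Z-1$, i.e.\ $\HH^1_*(Z,\Oh_Z)=0$; in particular $\HH^1(Z,\Oh_Z(m))=0$ for every $m$. Thus $\HH^1(Z,\ell(b))=0$ for all $b$, and $W_{b+e}=\HH^0(Z,\Oh_Z(b+e))$.

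Finally, for $b>0$ we have $b+e\ge 2$ (recall $e\ge 1$ by assumption $(B)$), so $\Oh_Z(b+e)$ is globally generated, being the restriction to $Z$ of the globally generated line bundle $\Oh_{\bbP^5}(b+e)$. Equivalently, the complete linear system $W_{b+e}$ is base point free, which is the assertion. I expect the only genuine obstacle to be the vanishing $\HH^1(Z,\ell(b))=0$, which hinges on recognizing $\ell$ as a twist of the dualizing sheaf $\omega_Z$ and then invoking Serre duality together with the ACM property; the rest of the argument is formal. I note in passing that this particular statement appears to require only the ACM hypothesis and assumption $(B)$ (used to set up the sequence \eqref{nbs2}), and not assumption $(A)$; an alternative, more geometric route would be to prove $N_{Z/X}(b)$ itself globally generated, but that seems to require the additional global generation of suitable twists of $\ell$, so the cohomological route above is both cleaner and sufficient for base point freeness of the image.
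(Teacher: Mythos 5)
Your proof is correct, but it takes a genuinely different route from the paper's. The paper argues geometrically: it invokes the global generation of $N_{Z/X}(b)$ for $b>0$, which holds because $X$ is a \emph{general} hypersurface (arguing as in \cite[Proposition 2.1]{BR}, which builds on \cite{Pa} and \cite{VoisinJDG}), and then observes that the image of the space of global sections of a globally generated bundle under the surjection $N_{Z/X}(b)\onto\Oh_Z(b+e)$ is a base point free subsystem. You instead kill the cohomological obstruction to surjectivity of $\HH^0(Z,N_{Z/X}(b))\to\HH^0(Z,\Oh_Z(b+e))$: writing $\ell(b)=\omega_Z(b-d-e+6)$, Serre duality on the Gorenstein surface $Z$ gives $\HH^1(Z,\ell(b))\isom\HH^1(Z,\Oh_Z(d+e-b-6))^\vee$, which vanishes because $Z$ is an ACM surface, so $W_{b+e}$ is in fact the \emph{complete} linear system $\HH^0(Z,\Oh_Z(b+e))$ and base point freeness is immediate from the global generation of $\Oh_Z(b+e)$. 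Your computation checks out (it can be cross-checked against the paper's sequence \eqref{dualeq1}, from which $\HH^1_*(Z,\ell)=0$ also follows since $E$ is ACM), and your route is both more elementary --- it avoids the cited input on normal bundles of subvarieties of very general hypersurfaces --- and more general, since it needs neither the generality of $X$ nor assumption $(A)$. One caveat you should confront: your stronger conclusion, pushed through the rest of the paper, would make $W_{e+1,C}$ of codimension $0$, rendering the numerical hypothesis $\binom{e+5}{4}\le 2d-4$ superfluous in Claim \ref{c1} and Proposition \ref{gg}; since that inequality is the paper's main quantitative constraint, you should either identify where else it is genuinely needed or explain why the main theorems do not then hold in the much stronger form your argument would suggest.
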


\begin{proof}
We have commutative diagrams
\begin{equation*}
\begin{tikzcd}
\HH^0(Z, N_{Z/X}(b))\tensor\Oh_Z \arrow[r, two heads]\arrow[d] & W_{b+e}\tensor\Oh_Z\arrow[d] \\
N_{Z/X}(b)\arrow[r, two heads] & \Oh_Z(b+e)
\end{tikzcd}
\end{equation*}
with surjective horizontal maps. {Arguing as in the proof of 
\cite[Proposition 2.1]{BR} (which expands on results implicit in \cite{Pa, 
VoisinJDG})}, we see that $N_{Z/X}(b)$ is globally generated for $b> 0$ and 
hence the left vertical arrow is surjective for $b> 0$. This implies that 
the right vertical map is also surjective, i.e., $W_{b+e}$ is a base point 
free linear subsystem of $\HH^0(Z, \Oh_Z(b+e))$ for $b>0$.
\end{proof}                                                                     
}
{\color{black}
We have the following preliminary result which will be used in the proof of 
Theorem \ref{mainthm2}.

\begin{prop}\label{genericallybpf}
Let $X \subset \bbP^5$ be a smooth hypersurface of degree $d$ and $Z \subset 
X$ be a local complete intersection surface. Under the assumptions {$(A)$ 
and $(B)$} above, there is a linear subsystem $\widetilde{W} \subset 
\HH^0(\bbP^5, \Oh_{\bbP^5}(e+1))$ whose base locus is supported on a finite 
set in the complement $\bbP^5\setminus{X}$.
\end{prop}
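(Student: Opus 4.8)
The plan is to lift the base-point-free linear system $W_{e+1}$ on $Z$ furnished by Lemma \ref{nisgg} to a linear system $\widetilde{W}$ on $\bbP^5$, and then to use the ampleness of $X$ to force its base locus to be zero-dimensional. First I would record the relevant surjectivity: since $X\subset\bbP^5$ is a hypersurface, restriction gives a surjection $\HH^0(\bbP^5,\Oh_{\bbP^5}(e+1))\onto\HH^0(X,\Oh_X(e+1))$ (the obstruction sits in $\HH^1(\bbP^5,\Oh_{\bbP^5}(e+1-d))=0$), and since $Z\subset X$ is ACM we have $\HH^1(X,I_{Z/X}(e+1))=0$, so $\HH^0(X,\Oh_X(e+1))\onto\HH^0(Z,\Oh_Z(e+1))$ is surjective as well. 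Writing $\rho$ for the composite surjection, I would set $\widetilde{W}:=\rho^{-1}(W_{e+1})$, where $W_{e+1}$ is the system $W_{b+e}$ of \eqref{coh_nbsz} with $b=1$.

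Next I would check that $\operatorname{Bs}(\widetilde{W})$ is disjoint from $X$, handling $Z$ and $X\setminus Z$ separately. Because $\rho$ is surjective, $\rho(\widetilde{W})=W_{e+1}$; that is, the restriction of $\widetilde{W}$ to $Z$ is exactly $W_{e+1}$, which is base point free by Lemma \ref{nisgg}, so $\operatorname{Bs}(\widetilde{W})\cap Z=\emptyset$. For the points of $X\setminus Z$ I would use the sections of the $\bbP^5$-ideal: every section of $I_{Z/\bbP^5}(e+1)$ vanishes on $Z$, hence lies in $\rho^{-1}(0)\subset\widetilde{W}$, and from the sequence $0\to\Oh_{\bbP^5}(e+1-d)\to I_{Z/\bbP^5}(e+1)\to I_{Z/X}(e+1)\to 0$ together with $\HH^1(\bbP^5,\Oh_{\bbP^5}(e+1-d))=0$ these sections surject onto $\HH^0(X,I_{Z/X}(e+1))$. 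By assumption $(A)$, $I_{Z/X}(e+1)$ is globally generated, so for each $p\in X\setminus Z$ some section of $I_{Z/X}(e+1)$ is nonzero at $p$; lifting it into $\widetilde{W}$ shows $p$ is not a base point. Hence $\operatorname{Bs}(\widetilde{W})\cap X=\emptyset$.

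Finally I would invoke ampleness. As an effective ample divisor on $\bbP^5$, the hypersurface $X$ meets every closed subvariety of positive dimension (the restriction of $\Oh_{\bbP^5}(1)$ to such a subvariety is ample, forcing a strictly positive intersection number with $X$). Since $\operatorname{Bs}(\widetilde{W})$ is a closed subscheme disjoint from $X$, its support can contain no curve, so it is a finite subset of $\bbP^5\setminus X$, which is the assertion.

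The substance of the argument is already contained in Lemma \ref{nisgg}; what remains is the bookkeeping of assembling two families of sections inside the single system $\widetilde{W}$ — the $\bbP^5$-ideal sections, which clear the base points lying on $X\setminus Z$ via $(A)$, and the lifts of $W_{e+1}$, which clear the base points on $Z$ via Lemma \ref{nisgg}. The only point requiring care is verifying that $\widetilde{W}$ restricts to precisely $W_{e+1}$ on $Z$ while still containing all the ideal sections; once the base locus is known to avoid $X$, its finiteness is an immediate consequence of ampleness rather than a separate obstacle.
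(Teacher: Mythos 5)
Your proposal is correct and follows essentially the same route as the paper: lift $W_{e+1}$ to $\bbP^5$, observe the lift contains the sections of $I_{Z/\bbP^5}(e+1)$ which (via assumption $(A)$) clear base points on $X\setminus Z$, use Lemma \ref{nisgg} to clear base points on $Z$, and invoke ampleness of $X$ for finiteness. The only cosmetic difference is that you work directly with the composite restriction to $\bbP^5$, whereas the paper first builds the intermediate system $\widetilde{W}_X$ on $X$ and then checks $\widetilde{W}\onto\widetilde{W}_X$ by the snake lemma.
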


\begin{proof}
We have the sequence 
$$0 \longrightarrow I_{Z/X}(e+1) \longrightarrow \Oh_X(e+1) \longrightarrow \Oh_Z(e+1) \longrightarrow 0.$$
Taking global sections, we get an exact sequence
$$ 0 \longrightarrow \HH^0(X, I_{Z/X}(e+1)) \longrightarrow \HH^0(X, \Oh_X(e+1)) \longrightarrow 
\HH^0(Z, \Oh_Z(e+1) \longrightarrow 0.$$
Note that right exactness follows from the fact that $Z$ is ACM.
Let $\widetilde{W}_X$ denote the lift of $W_{e+1}$ under the surjective map
\[
\begin{tikzcd}
    \HH^0(X, \Oh_{X}(e+1)) \arrow[r, two heads]& \
    \HH^0(Z, \Oh_{Z}(e+1)),
\end{tikzcd}
\]
so that we have an exact sequence
$$ 0\longrightarrow \HH^0(X, I_{Z/X}(e+1)) \longrightarrow \widetilde{W}_X \longrightarrow W_{e+1} \longrightarrow 0.$$
The linear system $\widetilde{W}_X$ is base point free. Indeed, it has no base point on $Z$ by Lemma  \ref{nisgg}. 
{Since $\widetilde{W}_X$ contains the linear subsystem 
$\HH^0(X, I_{Z/X}(e+1))$, using (A) we conclude that $\widetilde{W}_X$
has no base points in the complement $X\setminus Z$  either}.

Now let $\widetilde{W}$ denote the lift of $W_{e+1}$ under the surjective 
map
\[
\begin{tikzcd}
    \HH^0(\bbP^5, \Oh_{\bbP^5}(e+1)) \arrow[r, two heads]& \
    \HH^0(Z, \Oh_{Z}(e+1)),
\end{tikzcd}
\]
so that we have a diagram
\[
\begin{tikzcd}
0 \arrow[r] & \HH^0(\bbP^5, {I}_{Z/\bbP^5}(e+1)) \arrow[r]\arrow[d, 
two heads] & \widetilde{W} \arrow[d]\arrow[r, two heads]& W_{e+1} \arrow[d, 
equals]\arrow[r] & 0 \\
0 \arrow[r] & \HH^0(X, {I}_{Z/X}(e+1)) \arrow[r] & 
\widetilde{W}_X \arrow[r, two heads] & W_{e+1} \arrow[r] & 0.
\end{tikzcd}
\]
In the above, the surjection of the leftmost vertical map follows from the exact sequence 
$$0\longrightarrow\Oh_{\mathbb{P}^5}(e+1-d)\longrightarrow I_{Z/\mathbb{P}^5}(e+1)\longrightarrow I_{Z/X}(e+1)\longrightarrow 0.$$
By the snake lemma, it follows that the middle arrow
$$\widetilde{W} \longrightarrow \widetilde{W}_X$$
is also a surjection (in fact, for $e+1 < d$, this map is an isomorphism).

Let $T$ denote the base locus of the linear system $\widetilde{W}$. From the 
surjection above, it follows that $T$ does not meet the hypersurface $X$. 
Since $X$ is ample, we conclude that $T$ is a finite set in the complement 
$\bbP^5\setminus{X}$. This finishes the proof.
\end{proof}


With hypotheses as in Proposition \ref{genericallybpf}, let 
$C \subset Y \subset \bbP^4$
be general hyperlane sections of the inclusions $Z \subset X \subset \bbP^5$ 
above. Define the subspaces $W_{b+e, C} \subset \HH^0(C, \Oh_C(b+e))$ as the image under the composite map 
\[
W_{b+e} \into \HH^0(Z, \Oh_Z(b+e)) \onto \HH^0(C, \Oh_C(b+e)).
\]

\begin{claim}\label{c0}
    $W_{e+1, C}$ is base point free.
\end{claim}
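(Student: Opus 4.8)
The plan is to deduce the claim essentially for free from the base point freeness of $W_{e+1}$ on $Z$ established in Lemma \ref{nisgg}, using only the inclusion $C \subset Z$. First I would recall that Lemma \ref{nisgg}, applied with $b=1$, says that $W_{e+1} \subset \HH^0(Z, \Oh_Z(e+1))$ is a base point free linear subsystem on $Z$. Since $C = Z \cap H \subset Z$, for every point $p \in C$ there is a section $s \in W_{e+1}$ with $s(p) \neq 0$. Its image under the restriction map $\HH^0(Z, \Oh_Z(e+1)) \to \HH^0(C, \Oh_C(e+1))$ lies in $W_{e+1, C}$ by the very definition of $W_{b+e,C}$, and it is nonzero at $p$ because evaluation at a point $p \in C$ factors through restriction to $C$. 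As $p \in C$ was arbitrary, $W_{e+1, C}$ has no base point on $C$, which is the assertion.

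Alternatively---and this is the form I expect to be convenient when the Koszul-theoretic arguments of Theorem \ref{mainthm2} are brought in---I would route the argument through Proposition \ref{genericallybpf}. There I have a subsystem $\widetilde{W} \subset \HH^0(\bbP^5, \Oh_{\bbP^5}(e+1))$ whose base locus $T$ is a finite set contained in $\bbP^5 \setminus X$. Because $T$ is finite, a general hyperplane $H \isom \bbP^4$ avoids $T$ entirely, so the restricted system $\widetilde{W}|_H \subset \HH^0(\bbP^4, \Oh_{\bbP^4}(e+1))$ is base point free on all of $\bbP^4$; restricting further to $C \subset H$ keeps it base point free on $C$. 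The remaining bookkeeping is to identify this restriction with $W_{e+1,C}$: one chases the commutative square relating the surjection $\widetilde{W} \onto W_{e+1}$ with the two restriction maps into $\HH^0(C, \Oh_C(e+1))$, which agree since $C$ lies in both $Z$ and $H$, so that the image of $\widetilde{W}$ in $\HH^0(C, \Oh_C(e+1))$ is exactly $W_{e+1,C}$.

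The only point requiring care---and the nearest thing to an obstacle---is the appeal to generality of $H$ in this second route: I must choose $H$ so that it simultaneously avoids the finite base locus $T$ and is general enough for $C = Z \cap H$ and $Y = X \cap H$ to be the smooth, dimensionally transverse sections used throughout. Since $T$ is finite and the bad loci for transversality are closed of positive codimension in the dual space $(\bbP^5)^\vee$, a general $H$ satisfies all these open conditions at once, so no genuine difficulty arises. I would therefore present the direct deduction from Lemma \ref{nisgg} as the proof, noting that the same conclusion follows from Proposition \ref{genericallybpf} by the transversality of a general hyperplane.
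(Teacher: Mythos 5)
Your first route is exactly the paper's argument: the paper proves the claim via the commutative square in \eqref{bpfC}, where the surjectivity of $W_{e+1}\tensor\Oh_Z \onto \Oh_Z(e+1)$ (Lemma \ref{nisgg} with $b=1$) composed with the restriction $\Oh_Z(e+1)\onto\Oh_C(e+1)$ forces $W_{e+1,C}\tensor\Oh_C\to\Oh_C(e+1)$ to be surjective, which is your pointwise evaluation argument in sheaf-theoretic form. The proposal is correct and essentially identical to the paper's proof; the alternative route through Proposition \ref{genericallybpf} is also valid but unnecessary here.
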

\noindent\textit{Proof of Claim \ref{c0}:} For $b=1$, we have a commutative square
\begin{equation}\label{bpfC}
\begin{tikzcd}
W_{e+1}\tensor\Oh_Z \arrow[r, two heads]\arrow[d, two heads] &
W_{e+1, C}\tensor\Oh_C \arrow[d] \\
\Oh_Z(e+1) \arrow[r, two heads] & \Oh_C(e+1)
\end{tikzcd}
\end{equation}
where the left vertical map is surjective by Lemma \ref{nisgg}, and the bottom horizontal map, being the 
restriction map, is surjective. 
Consequently it follows that the right vertical arrow is a surjection as well.\hfill$\spadesuit$

\smallskip

As before, let $\widetilde{W}_{\bbP^4}$ be the lift of $W_{e+1, C}$ under the map
\[
\begin{tikzcd}
    \HH^0(\bbP^4, \Oh_{\bbP^4}(e+1)) \arrow[r, two heads]& \HH^0(C, \Oh_{C}(e+1)).
\end{tikzcd}
\]
We then have a commutative diagram with exact rows
\[
\begin{tikzcd}
0 \arrow[r] & {I}_{Z/\bbP^5}(e+1) \arrow[r]\arrow[d, two heads] & \Oh_{\bbP^5}(e+1) \arrow[d, two heads]\arrow[r]& \Oh_{Z}(e+1) \arrow[d, 
two heads]\arrow[r] & 0 \\
0 \arrow[r] & {I}_{C/\bbP^4}(e+1) \arrow[r] & 
\Oh_{\bbP^4}(e+1) \arrow[r, two heads] & \Oh_{C}(e+1) \arrow[r] & 0.
\end{tikzcd}
\]
which gives the exact sequence
$$0\longrightarrow
I_{Z/\mathbb{P}^5}(e)\longrightarrow I_{Z/\mathbb{P}^5}(e+1)\longrightarrow I_{C/\bbP^4}(e+1)\longrightarrow 0.$$
Consequently, we obtain the following surjection as $Z\subset\mathbb{P}^5$ is ACM:
\[
\begin{tikzcd}
    \HH^0(\bbP^5, {I}_{Z/\bbP^5}(e+1))\arrow[r, two heads]&\HH^0(\bbP^4, {I}_{C/\bbP^4}(e+1)).
\end{tikzcd}
\]


Now consider the commutative diagram
\[
\begin{tikzcd}
0 \arrow[r] & \HH^0(\bbP^5, {I}_{Z/\bbP^5}(e+1)) \arrow[r]\arrow[d, 
two heads] & \widetilde{W} \arrow[d]\arrow[r, two heads]& W_{e+1} \arrow[d, 
two heads]\arrow[r] & 0 \\
0 \arrow[r] & \HH^0(\bbP^4, {I}_{C/\bbP^4}(e+1)) \arrow[r] & 
\widetilde{W}_{\bbP^4} \arrow[r, two heads] & W_{e+1, C} \arrow[r] & 0.
\end{tikzcd}
\]
The middle vertical arrow is surjective since the left and right verticals 
are surjective.

\begin{lemma}
With notation as above, $\widetilde{W}_{\bbP^4}\subset \HH^0(\bbP^4, \Oh_{\bbP^4}(e+1))$
is a base point free linear subsystem. 
\end{lemma}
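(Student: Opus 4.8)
The plan is to realize $\widetilde{W}_{\bbP^4}$ as the restriction to the general hyperplane $H\cong\bbP^4$ of the linear system $\widetilde{W}\subset\HH^0(\bbP^5,\Oh_{\bbP^5}(e+1))$ produced in Proposition \ref{genericallybpf}, and then transport base point freeness across this restriction. The key input is the commutative diagram immediately preceding the statement, whose middle vertical arrow $\widetilde{W}\to\widetilde{W}_{\bbP^4}$ was just shown to be surjective. Since this arrow is induced by the restriction map $\HH^0(\bbP^5,\Oh_{\bbP^5}(e+1))\to\HH^0(\bbP^4,\Oh_{\bbP^4}(e+1))$, it identifies $\widetilde{W}_{\bbP^4}$ with the image of $\widetilde{W}$ under restriction to $H$; this identification is the one fact I would pin down carefully before anything else.

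First I would invoke Proposition \ref{genericallybpf}, which asserts that the base locus $T$ of $\widetilde{W}$ is a finite set contained in the complement $\bbP^5\setminus X$ (in particular $T\cap Z=\emptyset$). Because $T$ is finite, the hyperplanes meeting $T$ form a proper closed subset of the dual projective space; as $H$ is already chosen general, I may assume in addition that $H\cap T=\emptyset$, both conditions being open and dense.

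Next I would verify base point freeness pointwise. Fix $p\in H$. Since $p\notin T$, there is a section $s\in\widetilde{W}$ with $s(p)\neq 0$. Its restriction $s|_H$ lies in $\widetilde{W}_{\bbP^4}$ by the identification above, and evaluation commutes with restriction to $H$ (the fibres of $\Oh_{\bbP^5}(e+1)$ and $\Oh_H(e+1)$ agree at $p\in H$), so $(s|_H)(p)=s(p)\neq 0$. Hence $p$ is not a base point of $\widetilde{W}_{\bbP^4}$, and as $p$ was arbitrary, $\widetilde{W}_{\bbP^4}$ is base point free. I note that on the points of $C$ the same conclusion is already guaranteed by Claim \ref{c0}, so the real content lies off $C$, where the finiteness of $T$ and its disjointness from $X$ are exactly what is needed.

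The only genuine obstacle is the location and size of $T$: were the base locus of $\widetilde{W}$ positive-dimensional, or were it to meet $X$, a general hyperplane could not avoid it and the restriction could acquire base points off $C$. This is precisely what Proposition \ref{genericallybpf} rules out — through assumptions (A) and (B) together with the ampleness of $X$ — so essentially the whole weight of the argument rests on that proposition, and the remaining steps are formal.
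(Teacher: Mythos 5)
Your argument is correct and is essentially the paper's own proof: both use Proposition \ref{genericallybpf} to locate the base locus $T$ of $\widetilde{W}$ as a finite set off $X$, choose the general hyperplane to miss $T$, and push base point freeness through the surjection $\widetilde{W}\to\widetilde{W}_{\bbP^4}$ established just before the lemma. You merely spell out the pointwise evaluation step that the paper leaves implicit.
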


\begin{proof}
Recall from Proposition \ref{genericallybpf} that 
$\widetilde{W}$ is base point free away from a finite set $T \subset 
\bbP^5\setminus{X}$. Since a general hyperplane $\bbP^4 \subset \bbP^5$ will 
avoid the finite set $T$, and $\widetilde{W}\longrightarrow \widetilde{W}_{\bbP^4}$ is a surjection by the previous discussion, it follows that 
$\widetilde{W}_{\bbP^4}$ is base point free.
\end{proof}

\begin{prop}\label{gg}
With notation as above, if $\binom{e+5}{4} \leq 2d-4$, then the 
multiplication map
$$W_{d+e-5, C}\tensor\HH^0(C, \Oh_C(d)) \longrightarrow 
\HH^0(C, \Oh_C(2d+e-5))$$
is surjective.
\end{prop}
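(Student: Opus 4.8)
The plan is to reduce the stated surjectivity to a single multiplication map built from the degree-$(e+1)$ system $W_{e+1,C}$, which we have already shown to be base point free (Claim \ref{c0}), and then to obtain that one map from Green's theorem. The key structural input is that $\bigoplus_b W_{b+e}$ carries a module structure over the section ring of $Z$. Indeed, the projection $N_{Z/X}\to\Oh_Z(e)$ of \eqref{nbs2} is $\Oh_Z$-linear, so for $w\in W_{b+e}$ the image of a lift $\widetilde w\in\HH^0(Z,N_{Z/X}(b))$ and any $s\in\HH^0(Z,\Oh_Z(b'))$, the product $s\widetilde w\in\HH^0(Z,N_{Z/X}(b+b'))$ maps to $sw$, giving $W_{b+e}\cdot\HH^0(Z,\Oh_Z(b'))\subseteq W_{b+b'+e}$. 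Taking $b=1$, $b'=d-6$ and restricting to $C$ yields $W_{e+1,C}\cdot\HH^0(C,\Oh_C(d-6))\subseteq W_{d+e-5,C}$; here one uses that $\HH^0(Z,\Oh_Z(d-6))\to\HH^0(C,\Oh_C(d-6))$ is surjective because $Z\subset\bbP^5$ is ACM (note $d-6\ge 0$ is automatic, since the numerical hypothesis forces $d$ large).

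First I would use this inclusion to bound the image of the map in question from below,
\[
\im\big(W_{d+e-5,C}\tensor\HH^0(C,\Oh_C(d))\to\HH^0(C,\Oh_C(2d+e-5))\big)\ \supseteq\ W_{e+1,C}\cdot\HH^0(C,\Oh_C(d-6))\cdot\HH^0(C,\Oh_C(d)).
\]
Since $C=Z\cap H$ is a general hyperplane section of the ACM surface $Z$, it is itself ACM, hence projectively normal, so by the same argument as in Corollary \ref{2gen} the multiplication $\HH^0(C,\Oh_C(d-6))\tensor\HH^0(C,\Oh_C(d))\to\HH^0(C,\Oh_C(2d-6))$ is surjective. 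Therefore the right-hand side above equals the image of $W_{e+1,C}\tensor\HH^0(C,\Oh_C(2d-6))\to\HH^0(C,\Oh_C(2d+e-5))$, and the whole proposition is reduced to proving that this last map is surjective.

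The surjectivity of $W_{e+1,C}\tensor\HH^0(C,\Oh_C(2d-6))\to\HH^0(C,\Oh_C(2d+e-5))$ is where Green's exactness criterion enters. Because $\widetilde W_{\bbP^4}\onto W_{e+1,C}$, it is equivalent to prove surjectivity after replacing $W_{e+1,C}$ by the base point free system $\widetilde W_{\bbP^4}\subseteq\HH^0(\bbP^4,\Oh_{\bbP^4}(e+1))$ of the preceding lemma. Let $\widetilde M$ be the kernel bundle of $\widetilde W_{\bbP^4}\tensor\Oh_{\bbP^4}\onto\Oh_{\bbP^4}(e+1)$; restricting its defining sequence to $C$ (exact, as a sequence of bundles) and twisting by $\Oh_C(2d-6)$ shows the map is surjective provided $\HH^1(C,\widetilde M|_C(2d-6))=0$. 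I would establish this vanishing by applying Green's theorem to $\widetilde W_{\bbP^4}$ on $\bbP^4$, where the Koszul cohomology of $\widetilde M$ is computable, and then transferring to $C$ using the ACM property of $C\subset\bbP^4$ (which makes the relevant cohomology restriction maps surjective). The numerical input for Green's theorem is exactly the inequality $\operatorname{rank}\widetilde M+1=\dim\widetilde W_{\bbP^4}\le\binom{e+5}{4}\le 2d-4$, the binomial being the dimension of $\HH^0(\bbP^4,\Oh_{\bbP^4}(e+1))$.

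The main obstacle is precisely this last step: verifying the hypothesis of Green's theorem and controlling the Koszul cohomology of $\widetilde M$ in the correct twist. The difficulty is twofold. One must pass from $\bbP^4$, where the kernel bundle of $\widetilde W_{\bbP^4}$ is well understood, down to the curve $C$, whose genus is a priori large, so that a naive degree estimate directly on $C$ would not suffice; and one must check that the bound $\binom{e+5}{4}\le 2d-4$ is exactly strong enough to force the relevant Koszul strand to be exact. Everything else in the argument — the module structure of $\bigoplus_b W_{b+e}$, the projective normality of $C$, and the base point freeness statements — is formal given the results already established.
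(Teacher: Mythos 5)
Your reduction is the same as the paper's: the module structure $W_{b+e}\cdot\HH^0(\Oh(b'))\subseteq W_{b+b'+e}$ coming from the $\Oh_Z$-linearity of $N_{Z/X}\onto\Oh_Z(e)$, together with projective normality of $C$, correctly reduces the proposition to the surjectivity of $W_{e+1,C}\tensor\HH^0(C,\Oh_C(2d-6))\longrightarrow\HH^0(C,\Oh_C(2d+e-5))$. The gap is in how you finish. You route this last surjectivity through the vanishing $\HH^1(C,\widetilde M|_C(2d-6))=0$ for the kernel bundle $\widetilde M$ of $\widetilde W_{\bbP^4}\tensor\Oh_{\bbP^4}\onto\Oh_{\bbP^4}(e+1)$, and you leave that vanishing unproved, flagging it as the main obstacle. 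But this vanishing is strictly stronger than what you need and is genuinely harder: from $0\to\widetilde M\tensor I_{C/\bbP^4}(2d-6)\to\widetilde M(2d-6)\to\widetilde M|_C(2d-6)\to 0$ it would require controlling $\HH^2(\bbP^4,\widetilde M\tensor I_{C/\bbP^4}(2d-6))$ as well, which is a higher Koszul strand that Green's theorem with $p=0$ does not touch and which the hypothesis $\binom{e+5}{4}\le 2d-4$ is not calibrated to handle. The correct finish avoids $\widetilde M|_C$ entirely: Green's theorem on $\bbP^4$ with $p=0$, $a=e+1$, $k=2d+e-5$ gives surjectivity of $\widetilde{\mu}:\widetilde W_{\bbP^4}\tensor\HH^0(\bbP^4,\Oh_{\bbP^4}(2d-6))\to\HH^0(\bbP^4,\Oh_{\bbP^4}(2d+e-5))$; composing with the restriction $\HH^0(\bbP^4,\Oh_{\bbP^4}(2d+e-5))\onto\HH^0(C,\Oh_C(2d+e-5))$ (surjective because $C\subset\bbP^4$ is ACM) and noting that this composite factors through $W_{e+1,C}\tensor\HH^0(C,\Oh_C(2d-6))$ yields the surjectivity of $\mu$ directly. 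You already have every ingredient for this; you simply assembled them in the wrong order.

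A secondary but real error: you state that the numerical input to Green's theorem is $\dim\widetilde W_{\bbP^4}\le\binom{e+5}{4}\le 2d-4$. Green's criterion is a bound on the \emph{codimension} of the linear system in $\HH^0(\bbP^4,\Oh_{\bbP^4}(e+1))$, namely $\mathrm{codim}(\widetilde W_{\bbP^4})\le k-p-a=2d-6$, which follows from $\mathrm{codim}(\widetilde W_{\bbP^4})\le h^0(\Oh_{\bbP^4}(e+1))-2=\binom{e+5}{4}-2\le 2d-6$. A bound on the dimension of $\widetilde W_{\bbP^4}$ is not the hypothesis of the theorem and would point the inequality the wrong way.
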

}
Before we work out the proof of Proposition \ref{gg}, we recall the 
following result of Green which will play a key role for us:

 \begin{thm}[{\cite[Theorem 2]{Green}}]\label{green_koszul}
Let $\widetilde{W} \subset \HH^0(\bbP^r, \Oh_{\bbP^r}(a))$ be a base point free linear system. Then the Koszul complex
\[
\bigwedge\limits^{p+1}\widetilde{W}\tensor \HH^0(\bbP^r, \Oh_{\bbP^r}(k-a)) \longrightarrow \bigwedge\limits^{p}\widetilde{W}\tensor \HH^0(\bbP^r, \Oh_{\bbP^r}(k)) \longrightarrow
\bigwedge\limits^{p-1}\widetilde{W}\tensor \HH^0(\bbP^r, \Oh_{\bbP^r}(k+a))
\]
is exact in the middle provided that 
$ \mathrm{codim}(\widetilde{W}) \leq k - p - a.$
 \end{thm}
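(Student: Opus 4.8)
The plan is to pass from the Koszul complex to the \emph{syzygy bundle} of $\widetilde{W}$ and to reduce middle-exactness to a single sheaf-cohomology vanishing on $\bbP^r$. Using base point freeness, the evaluation map is surjective, so I define a locally free sheaf $M$ of rank $\dim\widetilde{W}-1$ by
$$0 \longrightarrow M \longrightarrow \widetilde{W}\tensor\Oh_{\bbP^r} \longrightarrow \Oh_{\bbP^r}(a) \longrightarrow 0.$$
Since the quotient is a line bundle, taking exterior powers yields, for every $q$, short exact sequences
$$0 \longrightarrow \textstyle\bigwedge^{q}M \longrightarrow \bigwedge^{q}\widetilde{W}\tensor\Oh_{\bbP^r} \longrightarrow \bigwedge^{q-1}M\tensor\Oh_{\bbP^r}(a) \longrightarrow 0.$$
Twisting the $q=p$ sequence by $\Oh_{\bbP^r}(k)$ and the $q=p+1$ sequence by $\Oh_{\bbP^r}(k-a)$ and chasing the resulting cohomology sequences, I identify the two Koszul differentials with the induced maps on $\HH^0$: the kernel of the outgoing differential is exactly $\HH^0(\bbP^r,\bigwedge^{p}M(k))$, while the image of the incoming differential is the image of $\bigwedge^{p+1}\widetilde{W}\tensor\HH^0(\Oh_{\bbP^r}(k-a)) \to \HH^0(\bbP^r,\bigwedge^{p}M(k))$. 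Hence the middle cohomology of the Koszul complex is the cokernel of this last map, and that cokernel injects into $\HH^1(\bbP^r,\bigwedge^{p+1}M(k-a))$. It therefore suffices to prove
$$\HH^1(\bbP^r,\textstyle\bigwedge^{p+1}M(k-a)) = 0 \quad\text{whenever}\quad \mathrm{codim}(\widetilde{W}) \leq k-p-a.$$

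I would prove this vanishing by induction on $c := \mathrm{codim}(\widetilde{W})$, that is on $\dim V-\dim\widetilde{W}$ where $V := \HH^0(\bbP^r,\Oh_{\bbP^r}(a))$. For the base case $c=0$ the system is complete and $M=M_V$ is the pullback of the cotangent bundle under the $a$-uple embedding (for $a=1$ one has $M_V\isom\Omega^1_{\bbP^r}(1)$, so $\bigwedge^{p+1}M_V\isom\Omega^{p+1}_{\bbP^r}(p+1)$); the required vanishing $\HH^1(\bbP^r,\bigwedge^{p+1}M_V(k-a))=0$ for $k-p-a\geq 0$ is the classical exactness of the Koszul complex of a complete linear system, which follows from Bott's formula (respectively from the known syzygies of the Veronese for general $a$).

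For the inductive step I enlarge $\widetilde{W}$ to $\widetilde{W}' := \widetilde{W}+\bbC\!\cdot\!s$ by one section; then $\widetilde{W}'$ is again base point free of codimension $c-1$. Comparing the two defining sequences via the snake lemma produces the extension
$$0 \longrightarrow M \longrightarrow M' \longrightarrow \Oh_{\bbP^r} \longrightarrow 0,$$
whose $(p+1)$-st exterior power, twisted by $\Oh_{\bbP^r}(k-a)$, reads
$$0 \longrightarrow \textstyle\bigwedge^{p+1}M(k-a) \longrightarrow \bigwedge^{p+1}M'(k-a) \longrightarrow \bigwedge^{p}M(k-a) \longrightarrow 0.$$
Since $c-1\leq k-p-a$, the inductive hypothesis gives $\HH^1(\bbP^r,\bigwedge^{p+1}M'(k-a))=0$, so the connecting map identifies $\HH^1(\bbP^r,\bigwedge^{p+1}M(k-a))$ with the cokernel of
$$\HH^0(\bbP^r,\textstyle\bigwedge^{p+1}M'(k-a)) \longrightarrow \HH^0(\bbP^r,\bigwedge^{p}M(k-a)).$$

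The main obstacle is precisely the surjectivity of this last $\HH^0$-map: it cannot be extracted from the displayed sequence alone, where it is \emph{equivalent} to the vanishing we seek, so it must be supplied by a second, coupled inductive statement. I would therefore strengthen the induction to carry simultaneously the $\HH^1$-vanishing and the companion $\HH^0$-surjectivities over a suitable range of twists, anchoring the surjectivity through the complete-system filtration $0\to M\to M_V\to\Oh_{\bbP^r}^{\oplus c}\to 0$ together with the full (Bott) cohomology of $M_V$. Reconciling the vanishing and surjectivity statements degree by degree is where the numerical hypothesis $\mathrm{codim}(\widetilde{W})\leq k-p-a$ is consumed, and it constitutes the technical heart of the argument.
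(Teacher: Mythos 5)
The paper does not prove this statement at all: it is quoted verbatim from Green \cite[Theorem 2]{Green} as an external input, so there is no internal proof to compare with and your argument must stand on its own. Its first half does: base point freeness makes the evaluation map surjective, the exterior-power sequences for the syzygy bundle $M$ are exact because the quotient $\Oh_{\bbP^r}(a)$ is a line bundle, and your identification of the middle Koszul cohomology with the cokernel of $\bigwedge^{p+1}\widetilde{W}\tensor\HH^0(\bbP^r,\Oh_{\bbP^r}(k-a))\longrightarrow\HH^0(\bbP^r,\bigwedge^{p}M(k))$, which injects into $\HH^1(\bbP^r,\bigwedge^{p+1}M(k-a))$, is correct (using $\HH^1(\bbP^r,\Oh_{\bbP^r}(k-a))=0$, automatic for $r\geq 2$ and harmless for $r=1$ since the hypothesis forces $k-a\geq p+\mathrm{codim}(\widetilde{W})\geq 0$). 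The base case $c=0$ is fine for $a=1$ by Bott's formula, though for general $a$ the input you cite (syzygies of the Veronese) is itself a theorem of essentially the same depth, so even a completed induction would be a reduction to the complete-system case rather than a self-contained proof.

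The genuine gap is the inductive step, and it is not a technicality. As you yourself observe, once $\HH^1(\bbP^r,\bigwedge^{p+1}M'(k-a))=0$ is known, the vanishing of $\HH^1(\bbP^r,\bigwedge^{p+1}M(k-a))$ is \emph{equivalent} to the surjectivity of $\HH^0(\bbP^r,\bigwedge^{p+1}M'(k-a))\longrightarrow\HH^0(\bbP^r,\bigwedge^{p}M(k-a))$, so deferring that surjectivity to an unformulated ``coupled inductive statement'' defers the entire theorem. The obvious candidates for it fail numerically: covering $\HH^0(\bbP^r,\bigwedge^{p}M(k-a))$ by $\bigwedge^{p+1}\widetilde{W}\tensor\HH^0(\bbP^r,\Oh_{\bbP^r}(k-2a))$ would need the theorem for $\widetilde{W}$ at twist $k-a$, i.e.\ $\mathrm{codim}(\widetilde{W})\leq k-p-2a$, strictly stronger than the hypothesis; likewise, decomposing $\bigwedge^{p+1}\widetilde{W}'\cong\bigwedge^{p+1}\widetilde{W}\oplus(s\wedge\bigwedge^{p}\widetilde{W})$ and chasing a Koszul cycle through the two complexes leaves a residual cycle in $\bigwedge^{p}\widetilde{W}\tensor\HH^0(\bbP^r,\Oh_{\bbP^r}(k-a))$ whose exactness again requires the same stronger bound. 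Bridging this is where the real work lies (it must exploit the genericity of the added section and consume the codimension count); tellingly, apart from being handed to the inductive hypothesis, the hypothesis $\mathrm{codim}(\widetilde{W})\leq k-p-a$ is never actually used anywhere in your argument. What you have is a correct and standard reformulation of the statement as an $\HH^1$-vanishing for the syzygy bundle, plus its classical base case --- not a proof.
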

{\color{black}
\begin{proof}[Proof of Proposition \ref{gg}]

We have a commutative square where the vertical maps are the multiplication maps, and the horizontal maps  
are the restriction maps:
\begin{equation*}
    \begin{tikzcd}
    \widetilde{W}_{\bbP^4}\tensor\HH^0(\bbP^4, \Oh_{\bbP^4}(2d-6))\arrow[r]\arrow[d, swap,"\widetilde{\mu}"] & W_{e+1, C}\tensor\HH^0(C, \Oh_{C}(2d-6))\arrow[d,"\mu"] \\
\HH^0(\bbP^4, \Oh_{\bbP^4}(2d+e-5))\arrow[r, two heads]& \HH^0(C, \Oh_{C}(2d+e-5))
\end{tikzcd}
\end{equation*}
\begin{claim}\label{c1}
Under the assumptions of Proposition \ref{gg}, the map 
$$\mu:W_{e+1, C}\tensor\HH^0(C, \Oh_{C}(2d-6))\longrightarrow\HH^0(C, \Oh_{C}
(2d+e-5))$$ is surjective.    
\end{claim}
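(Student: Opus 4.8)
The plan is to deduce the surjectivity of $\mu$ from that of the left vertical map $\widetilde{\mu}$ in the commutative square preceding the claim, and to establish the latter by a direct application of Green's theorem (Theorem \ref{green_koszul}). First I would record that the bottom horizontal restriction map $\HH^0(\bbP^4, \Oh_{\bbP^4}(2d+e-5)) \to \HH^0(C, \Oh_C(2d+e-5))$ is surjective: indeed $Z\subset\bbP^5$ is ACM (being ACM in the ACM hypersurface $X$), so its general hyperplane section $C\subset\bbP^4$ is ACM, hence projectively normal. Granting for the moment that $\widetilde{\mu}$ is surjective, the composite along the ``down-then-right'' path of the square is then a composition of two surjections, hence surjective; by commutativity it agrees with the ``right-then-down'' composite $\mu\circ\rho_1$, where $\rho_1$ is the top restriction map. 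Since the image of $\mu\circ\rho_1$ is contained in the image of $\mu$, it follows that $\mu$ is surjective. Thus everything reduces to proving that
$$\widetilde{\mu}\colon \widetilde{W}_{\bbP^4}\tensor\HH^0(\bbP^4, \Oh_{\bbP^4}(2d-6))\longrightarrow \HH^0(\bbP^4, \Oh_{\bbP^4}(2d+e-5))$$
is surjective.

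Next I would apply Green's theorem to the base point free linear system $\widetilde{W}_{\bbP^4}\subset\HH^0(\bbP^4, \Oh_{\bbP^4}(e+1))$, base point freeness being exactly the content of the Lemma immediately preceding Proposition \ref{gg}. Taking $r=4$, $a=e+1$, $p=0$ and $k=2d+e-5$, the Koszul complex of Theorem \ref{green_koszul} degenerates (as $\bigwedge^{-1}\widetilde{W}_{\bbP^4}=0$) to
$$\widetilde{W}_{\bbP^4}\tensor\HH^0(\bbP^4, \Oh_{\bbP^4}(2d-6))\longrightarrow \HH^0(\bbP^4, \Oh_{\bbP^4}(2d+e-5))\longrightarrow 0,$$
and exactness in the middle here is precisely the surjectivity of $\widetilde{\mu}$. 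By Green's criterion this holds provided $\mathrm{codim}(\widetilde{W}_{\bbP^4})\leq k-p-a = 2d-6$.

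Finally I would verify this codimension bound. Since $\dim\HH^0(\bbP^4, \Oh_{\bbP^4}(e+1))=\binom{e+5}{4}$, we have $\mathrm{codim}(\widetilde{W}_{\bbP^4})=\binom{e+5}{4}-\dim\widetilde{W}_{\bbP^4}$. As $\widetilde{W}_{\bbP^4}$ is base point free and $e+1\geq 2$, it cannot be spanned by a single section (a lone section of $\Oh_{\bbP^4}(e+1)$ vanishes along a hypersurface, producing base points), so $\dim\widetilde{W}_{\bbP^4}\geq 2$. Combined with the standing hypothesis $\binom{e+5}{4}\leq 2d-4$, this yields $\mathrm{codim}(\widetilde{W}_{\bbP^4})\leq\binom{e+5}{4}-2\leq 2d-6$, exactly as required. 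I expect this last numerical bookkeeping to be the only real obstacle: the hypothesis only gives $\binom{e+5}{4}\leq 2d-4$, whereas Green's theorem demands $\mathrm{codim}(\widetilde{W}_{\bbP^4})\leq 2d-6$, so the argument genuinely needs the two extra dimensions supplied by base point freeness of $\widetilde{W}_{\bbP^4}$ to bridge the gap of $2$. Everything else — the reduction via the square and the shape of the Koszul complex — is formal once base point freeness is in hand.
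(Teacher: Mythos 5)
Your proposal is correct and follows essentially the same route as the paper: reduce to the surjectivity of $\widetilde{\mu}$ via the commutative square, apply Green's theorem with $p=0$, $k=2d+e-5$, $a=e+1$ to the base point free system $\widetilde{W}_{\bbP^4}$, and obtain the codimension bound $\mathrm{codim}(\widetilde{W}_{\bbP^4})\leq\binom{e+5}{4}-2\leq 2d-6$ from the hypothesis $\binom{e+5}{4}\leq 2d-4$. The only cosmetic difference is that the paper extracts the extra ``$-2$'' from base point freeness of $W_{e+1,C}$ on $C$ (Claim \ref{c0}) together with $\mathrm{codim}(W_{e+1,C})=\mathrm{codim}(\widetilde{W}_{\bbP^4})$, whereas you argue directly on $\bbP^4$; both are available and equivalent here.
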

\noindent{\it Proof of Claim \ref{c1}:} First, recall that 
$\widetilde{W}_{\bbP^4}$ is base point free. In Green's result (Theorem 
\ref{green_koszul} above),  letting $p = 0$, $k=2d+e-5$, and $a=e+1$,
we see that if 
\begin{equation}\label{need}
    \mbox{codim}(W_{e+1, C})\ = \ \mbox{codim}(\widetilde{W}_{\bbP^4})\\  \leq \ 2d-6,
\end{equation}
then the left vertical map $\widetilde{\mu}$ is surjective which implies 
that the right vertical map $\mu$ is surjective as well. To prove 
\eqref{need}, note that $W_{e+1, C}$ is a base point free subspace of 
$\HH^0(C,\Oh_C(e+1))$ by Claim \ref{c0}. 
As the restriction map 
$$\HH^0(\bbP^4, \Oh_{\bbP^4}(e+1)) \longrightarrow \HH^0(C, \Oh_C(e+1))$$
is a surjection,  it follows that 
$$\mbox{codim}(W_{e+1, C}) \leq h^0(\Oh_{\bbP^4}(e+1)) - 2 
= \binom{e+5}{4}-2.$$
Thus, \eqref{need} holds since $\binom{e+5}{4} \leq 2d -4$, whence the multiplication map $\mu$ is surjective.\hfill$\spadesuit$

\smallskip

We continue with the proof of Proposition \ref{gg}. Recall that $d>6$ by hypothesis as $e\geq 1$.
We have the commutative diagram
\begin{equation*}
\begin{tikzcd}
{W}_{e+1, C}\tensor\HH^0(C, \Oh_{C}(1))^{\tensor (2d-6)}\arrow[r, two heads]\arrow[d, swap, "\mu_1"] & 
W_{e+1, C}\tensor\HH^0(C, \Oh_{C}(2d-6))\arrow[d, "\mu"] \\
W_{d+e-5, C}\tensor\HH^0(C, \Oh_{C}(d))\arrow[r, "\mu_d"] & \HH^0(C, \Oh_{C}(2d+e-5)). 
\end{tikzcd}
\end{equation*}
That the top horizontal map is surjective follows by a diagram similar to \eqref{multiplicationmap}.
The surjectivity of $\mu_d$ now follows by the surjectivity of $\mu$ proven in Claim \ref{c1}.
\end{proof}

}

\section{Proof of Theorem \ref{mainthm2} via the Beauville-M\'erindol criterion}
We recall a very elegant splitting criterion, due to Beauville and M\'erindol (see \cite[Lemme 1]{BM87})
for a sequence of vector bundles on a curve to be split.
Since the proof is very short, we include it to enhance the ease of reading.

\begin{lemma}[The Beauville-M\'erindol criterion]
Let $C$ be a local complete intersection projective curve and 
\begin{equation}\label{sess}
    0 \longrightarrow E \longrightarrow F \longrightarrow G \longrightarrow 0 
\end{equation}
be a short exact sequence of bundles. This sequence splits if 
\begin{enumerate}
\item[(i)]  $\HH^0(C, F) \longrightarrow \HH^0(C, G)$ is surjective, and
\item[(ii)] the cup product map
$$ \cup : \HH^0(C, G)\tensor\HH^0(C, E^\vee\tensor\omega_C) \longrightarrow \HH^0(C, E^\vee\tensor{G}\tensor\omega_C)$$
is surjective.
\end{enumerate}
\end{lemma}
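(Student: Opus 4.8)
The plan is to read the splitting of \eqref{sess} as the vanishing of its extension class, and to detect this vanishing by pairing it, via Serre duality, against the spanning set provided by hypothesis (ii). Since $E$, $F$, $G$ are vector bundles on the local complete intersection curve $C$, the dualizing sheaf $\omega_C$ is a line bundle and Serre duality is perfect; moreover the sequence \eqref{sess} is classified by an extension class
$$\epsilon \in \Ext^1(G, E) \isom \HH^1(C, E\tensor G^\vee),$$
and \eqref{sess} splits if and only if $\epsilon = 0$. Serre duality gives a perfect pairing
$$\HH^1(C, E\tensor G^\vee) \times \HH^0(C, E^\vee\tensor G\tensor\omega_C) \longrightarrow \HH^1(C,\omega_C)\isom\bbC,$$
so it suffices to prove that $\epsilon$ annihilates every element of $\HH^0(C, E^\vee\tensor G\tensor\omega_C)$.

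This is exactly where hypothesis (ii) is used: the surjectivity of
$$\cup:\HH^0(C,G)\tensor\HH^0(C, E^\vee\tensor\omega_C)\longrightarrow\HH^0(C, E^\vee\tensor G\tensor\omega_C)$$
reduces the task to showing that $\langle\epsilon, s\cup t\rangle = 0$ for all $s\in\HH^0(C,G)$ and $t\in\HH^0(C, E^\vee\tensor\omega_C)$. The heart of the argument is the associativity of the cup product, combined with the compatibility of the contraction maps $G^\vee\tensor G\to\Oh_C$ and $E\tensor E^\vee\to\Oh_C$ used to form the Serre pairing. Performing the contraction of the $G^\vee\tensor G$ factors first, one rewrites
$$\langle\epsilon, s\cup t\rangle = \langle\epsilon\cup s,\ t\rangle,$$
where now $\epsilon\cup s\in\HH^1(C,E)$ and the right-hand side is the Serre-duality pairing of $\HH^1(C,E)$ against $\HH^0(C, E^\vee\tensor\omega_C)$.

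Finally I would invoke the standard identification of the connecting homomorphism with cup product by the extension class: the coboundary $\delta\colon\HH^0(C,G)\to\HH^1(C,E)$ of the long exact sequence of \eqref{sess} satisfies $\delta(s)=\epsilon\cup s$. Hypothesis (i) asserts that $\HH^0(C,F)\to\HH^0(C,G)$ is surjective, which by exactness is equivalent to $\delta = 0$; hence $\epsilon\cup s = 0$ for every $s$, and therefore $\langle\epsilon, s\cup t\rangle = 0$ for all $s$ and $t$. By the surjectivity in (ii) this means $\epsilon$ kills all of $\HH^0(C, E^\vee\tensor G\tensor\omega_C)$, so perfectness of Serre duality forces $\epsilon = 0$ and the sequence splits.

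The step I expect to be the main obstacle is the verification of the identity $\langle\epsilon, s\cup t\rangle = \langle\epsilon\cup s,\ t\rangle$: one must keep careful track of which tensor factors are contracted in the Serre pairing and confirm that contracting $G^\vee\tensor G$ first legitimately produces the class $\epsilon\cup s\in\HH^1(C,E)$ before the remaining $E\tensor E^\vee$ contraction against $t$ is carried out. This is a formal but somewhat delicate bookkeeping statement about cup products and trace maps, and it is precisely the point at which hypotheses (i) and (ii) are made to interact.
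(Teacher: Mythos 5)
Your proposal is correct and follows essentially the same route as the paper: both arguments view the extension class $\eta\in\Ext^1(G,E)$ as a linear functional on $\HH^0(C,E^\vee\tensor G\tensor\omega_C)$ via Serre duality, use the surjectivity in (ii) to reduce its vanishing to the vanishing of the pairing on decomposable classes $s\cup t$, and then identify that pairing with the connecting homomorphism $\partial\colon\HH^0(C,G)\to\HH^1(C,E)$, which is zero by (i). The identity $\langle\eta, s\cup t\rangle=\langle\partial(s),t\rangle$ that you flag as the delicate bookkeeping step is precisely the commutative triangle $\partial=\eta\circ\cup$ that the paper records.
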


\begin{proof}
We first note that the boundary map  $\HH^0(C, G) {\overset{\partial}{\longrightarrow}}  \HH^1(C, E)$ 
yields the map
$$\partial : \HH^0(C, G)\tensor\HH^0(C, E^\vee\tensor\omega_C)  \longrightarrow \bbC.$$
The short exact sequence \eqref{sess} corresponds to an element
$\eta \in \Ext^1(G, E) \isom \HH^1(C, G^\vee\tensor{E}),$ and via Serre duality, we 
treat the element $\eta$ as a map
$$\eta: \HH^0(C, G\tensor{E}^\vee\tensor\omega_C) \longrightarrow \bbC.$$
To this end, we note the following commutative diagram
\begin{equation}
\begin{tikzcd}
\HH^0(C, G)\tensor\HH^0(C, E^\vee\tensor\omega_C)\arrow[r, "\partial"]\arrow[d, swap, "\cup"] & \bbC \\
\HH^0(C, G\tensor{E}^\vee\tensor\omega_C)\arrow[ur,swap,"\eta"]
\end{tikzcd}
\end{equation}
Consequently, we have that $\partial = \eta\circ\cup$. Since the cup product map $\cup$ is surjective, 
we have
$\eta = 0 \iff \partial = 0$, and the latter is zero by assumption.
\end{proof}


\comment{
\begin{lemma}\label{morep}
Let $X\subset \mathbb{P}^4$ be a smooth hypersurface of degree $d$, and $C\subset X$ be an ACM local complete intersection curve. 
Assume that {$I_{C/X}(e+1)$ is globally generated} but $I_{C/\mathbb{P}^4}(e+1)$ is not globally generated. Then $C\subset X$ is extendable.
\end{lemma}

\begin{proof}
By assumption, we have a surjection 
$$\Oh_X^{\oplus M} \onto I_{C/X}(e+1)$$
that leads to the following diagram
\[
\begin{tikzcd}
0\arrow[r] & \Oh_{\bbP^4}(-d)^{\oplus M}\arrow[r, "f"]\arrow[d] & \Oh_{\bbP^4}^{\oplus M}\arrow[r]\arrow[d] & 
\Oh_X^{\oplus M}\arrow[r]\arrow[d] & 0 \\
0\arrow[r]  & \Oh_{\bbP^{4}}(e+1-d)\arrow[r, "f"] & I_{C/\bbP^{4}}(e+1)\arrow[r] &  I_{C/X}(e+1)\arrow[r] & 0.
\end{tikzcd}
\]
Since the middle vertical arrow is not a surjection, its image is $\mathcal{J}(e+1)$ where $\mathcal{J}$ 
is an ideal sheaf defining a subscheme $\Sigma$ in $\bbP^4$. Note that $\mathcal{J}=\widetilde{P}$, the 
sheafification of the homogeneous ideal 
$$P:=\textrm{Image}(\HH^0_*(\bbP^4, \Oh_{\bbP^4}(-e-1)^{\oplus M})\longrightarrow
\HH^0_*(\bbP^4,I_{C/\bbP^{4}}))$$ 
of $S:=\mathbb{C}[X_0,\cdots, X_4]$. 

Let $X=\left\{f=0\right\}$, and we sometimes write $(f)$ for $I_{X/\mathbb{P}^4}$. 
One can argue as in the proof of Lemma \ref{l3}(1) that $\Sigma\cap X=C$, or equivalently 
$\mathcal{J}+(f)=I_{C/\mathbb{P}^4}$. Note that $f\notin P$. Indeed, if $f\in P$, sheafifying we get $(f)\subseteq \mathcal{J}$, whence $\mathcal{J}=I_{C/\mathbb{P}^4}$, a contradiction.


To prove the lemma, we will show that $C$ {\it extends infinitesimally} in the sense of  
Corollary \ref{mprvcor}(4). To this end, we set $\mathcal{I}_1:= \mathcal{J} + (f^2)$.

\begin{claim}\label{ne}
    $\mathcal{J} \subset \mathcal{I}_1 \subsetneq {I}_{C/\mathbb{P}^4}.$
\end{claim}
\noindent{\it Proof of Claim 2:}
We only need to show that $\mathcal{I}_1\subsetneq I_{C/\mathbb{P}^4}$. This follows from the fact that $$P+(f^2)\subsetneq \HH^0_*(\bbP^4,I_{C/\mathbb{P}^4}).$$ Indeed, $f\in \HH^0_*(I_{C/\mathbb{P}^4})$ but $f\notin P+(f^2)$ as $P$ is homogeneous and $f\notin P$. The assertion follows from sheafifying the above.\hfill$\spadesuit$

\smallskip

Let $C_{(1)}\subset\mathbb{P}^4$ be the subscheme defined by the ideal $\mathcal{I}_1$, and let $X_{(1)}\subseteq\mathbb{P}^4$ be the subscheme defined by $f^2=0$. Then it is straightforward to verify that 
$C\subset C_{(1)}\subset X_{(1)}$ and that $C_{(1)}\cap X=C$. 

Consider the following diagram with exact rows:
\[
\begin{tikzcd}
0\arrow[r] & I_{C_{(1)}/X_{(1)}}(-d)\arrow[r]\arrow[d, "f"] & 
\Oh_{X_{(1)}}(-d)\arrow[r]\arrow[d, "f"] & \Oh_{C_{(1)}}(-d)
\arrow[r]\arrow[d, "f"] & 0 \\
0\arrow[r] & I_{C_{(1)}/X_{(1)}}\arrow[r]\arrow[d] & 
\Oh_{X_{(1)}}\arrow[r]\arrow[d] & \Oh_{C_{(1)}}
\arrow[r]\arrow[d] & 0 \\
0\arrow[r] & I_{C/X}\arrow[r]\arrow[d] & 
\Oh_{X}\arrow[r]\arrow[d] & \Oh_{C}
\arrow[r]\arrow[d] & 0 \\
 & 0 & 0 &  0 & .
\end{tikzcd}
\]

\begin{claim} In the above diagram, we have:
\begin{itemize}
    \item[(i)] The middle vertical sequence is exact with $${\rm Image}~[f:\Oh_{X_{(1)}}(-d) \longrightarrow \Oh_{X_{(1)}}] = \Oh_{X}(-d)$$ via the injection $\mathcal{O}_X(-d)\xrightarrow[]{f}\Oh_{X_{(1)}}$.
    \item[(ii)] The right vertical sequence is exact with $${\rm Image}~[f:\Oh_{C_{(1)}}(-d) \longrightarrow \Oh_{C_{(1)}}] = \Oh_{C}(-d)$$ via the injection $\mathcal{O}_C(-d)\xrightarrow[]{f}\Oh_{C_{(1)}}$.
    \item[(iii)] The left vertical sequence is exact with $${\rm Image}~[f: I_{C_{(1)}/X_{(1)}}(-d) \longrightarrow I_{C_{(1)}/X_{(1)}}] = I_{C/X}(-d)$$ via the injection $I_{C/X}(-d)\xrightarrow[]{f}I_{C_{(1)}/X_{(1)}}$.
\end{itemize}
\end{claim}

\noindent{\it Proof of Claim 3:}
We denote the local equation of $X\subset\mathbb{P}^4$ by the same symbol $f=0$ in Spec$(A)$. Let the ideals of $C, C_{(1)}, \Sigma$ in Spec$(A)$ be $I, I_1, J$ respectively. Recall that $$J+(f)=I\,\textrm{ and }\, J+(f^2)=I_1,\, \textrm{ in particular }\, I=I_1+(f).$$

We have (i) as $A/(f)\xrightarrow[]{f} A/(f^2)$ is injective and $$f\cdot(A/(f^2)) = (f)/(f^2) \isom f(A/(f)).$$ 

To see (ii), consider the map 
\begin{equation}\label{tm}
    {f}: A \longrightarrow A/I_1, \,\, a \mapsto [af]:= af + I_1.
\end{equation} 
It is evident that the the image of this map is contained in $I/I_1$. 
Let $i \in I$; then $i=i_1 + af$ for some $i_1 \in I_1$ and $a\in A$. This shows that 
$a\in A$ is a preimage of $[i]\in I/I_1$ under the above map, which is therefore surjective onto 
$I/I_1$. Moreover
$fi = fi_1 + af^2 \in I_1$ whence $I \subset {\rm Ker}({f})$. 
Thus we have a surjection
$${f}: A/I \onto I/I_1.$$
Globally, this gives the first surjection below:
$$\Oh_C(-d) \onto I_{C/\mathbb{P}^4}/\mathcal{I}_1 \into \Oh_{C_{(1)}}.$$
\textcolor{red}{The surjectivity of the first map implies that the support of $I_{C/\mathbb{P}^4}/\mathcal{I}_1$ is a subscheme of $C$ which is non-empty by Claim \ref{ne}. Observe that the first map has 
non-trivial kernel if and only if this subscheme is a proper non-empty subscheme of $C$. Since
$C$ is irreducible and reduced, were the support to be a proper non-empty subscheme of $C$, then 
$I_{C/\mathbb{P}^4}/\mathcal{I}_1$ would be a torsion sheaf on $C_{(1)}$ which would contradict the injectivity of the second map.
Hence the first map must be an isomorphism. Thus we have the short exact sequence
$$0 \longrightarrow \Oh_C(-d) \stackrel{f}{\longrightarrow} \Oh_{C_{(1)}} \longrightarrow \Oh_C \longrightarrow 0.$$}
Finally, note that \eqref{tm} also induces a surjection $A/I_1\onto I/I_1$.

To prove (iii), note that the fact that ${\rm Ker}(I_{C_{(1)}/X_{(1)}}\longrightarrow I_{C/X})$ can be 
identified with $I_{C/X}(-d)$ via the injection $I_{C/X}(-d)\xrightarrow[]{f}I_{C_{(1)}/X_{(1)}}$ follows 
from (i), (ii) and the snake lemma. The conclusion follows by noting that
$$f(I_1/(f^2)) = f(J+(f^2)/(f^2)) = fJ + (f^2)/(f^2) \isom f(I/(f)),$$ which finishes the proof of the 
claim.\hfill$\spadesuit$

\smallskip

By Corollary \ref{mprvcor} and part (iii) of the above claim, it follows that $C\subset X$ is extendable. 
\end{proof}
}

\begin{proof}[Proof of Theorem \ref{mainthm2}]
{\color{black} By taking a general hyperplane section of the inclusions 
$$Z \subset \Theta \subset X \subset \bbP^5$$ 
we obtain inclusions 
$$C \subset S \subset Y \subset \bbP^4.$$
Here $S$ is a smooth complete intersection surface.
}

Recall that the normal bundle $N_{C/Y}$ is a rank $2$ bundle and as such we 
have
$$N_{C/Y}^\vee \isom N_{C/Y}\tensor\left(\det{N_{C/Y}}\right)^{-1} \isom 
N_{C/Y}\tensor{\omega_Y}\tensor\omega_{C}^{-1}.$$
Consequently, 
\begin{equation}\label{ndual}
N_{C/Y}^\vee\tensor\omega_C  \isom N_{C/Y}\tensor{\omega_Y}.
\end{equation}
By the Beauville-M\'erindol criterion, we need to check that
\begin{enumerate}
\item[(a)] the map $\alpha: \HH^0(C, N_{C/\bbP^4}) \longrightarrow \HH^0(C, \Oh_C(d))$ 
is surjective, and 
\item[(b)] the cup product map
$$\HH^0(C, \Oh_C(d))\tensor\HH^0(C, N_{C/Y}^\vee\tensor\omega_C) 
\longrightarrow\HH^0(C, N_{C/Y}^\vee\tensor\omega_C(d))$$
is surjective.
\end{enumerate}
Since $Y$ is a {\it general} hypersurface of degree $d$ in $\bbP^4$,  
we have (see, for example, \cite[Proposition 3.2]{BK})
\small
$$\textrm{Image}\left[\HH^0(\bbP^4, \Oh_{\bbP^4}(d)) \longrightarrow 
\HH^0(C, \Oh_C(d))\right] \subset
\textrm{Image}\left[\HH^0(\bbP^4, N_{C/\bbP^4}(d)) \longrightarrow \HH^0(C, 
\Oh_C(d))\right].$$
\normalsize
Recall that $C$ is ACM, whence the map 
$$\HH^0(\bbP^4, \Oh_{\bbP^4}(d)) \longrightarrow\HH^0(C, \Oh_C(d))$$
is surjective, which verifies condition (a).

For (b), using the identification in \eqref{ndual}, we are reduced to proving that
the cup product map
$$\HH^0(C, \Oh_C(d))\tensor\HH^0(C, N_{C/Y}(d-5)) \longrightarrow \HH^0(C, N_{C/Y}(2d-5))$$
is surjective. Let us define
$$V_d:= \HH^0(C, \Oh_C(d)).$$
{\color{black}
The normal bundle sequence for the inclusions 
$C\subset S \subset Y$ 
\begin{equation}\label{nbsc}
0 \longrightarrow \ell_C:= \ell\tensor\Oh_C \longrightarrow N_{C/Y} \longrightarrow \Oh_C(e) \longrightarrow 0
\end{equation}
is obtained by restricting the normal sequence \eqref{nbs2}
\begin{equation*}
0 \longrightarrow \ell \longrightarrow N_{Z/X} \longrightarrow \Oh_Z(e) 
\longrightarrow 0.
\end{equation*}             
Taking cohomology, we get the sequence
$$0 \longrightarrow\HH^0(C, \ell_C) \longrightarrow \HH^0(C, N_{C/Y}) 
{\overset{\bar\alpha}{\longrightarrow}} \HH^0(C, \Oh_C(e)) \longrightarrow 
\cdots .$$

Setting $\overline{W}:= \textrm{Image}(\bar\alpha)$, 
we have an exact sequence
\begin{equation*}
0 \longrightarrow \HH^0(C, \ell_C) \longrightarrow \HH^0(C, N_{C/Y}) 
\longrightarrow \overline{W} \longrightarrow 0.
\end{equation*}
More generally, twisting \eqref{nbsc} with $\Oh_C(b)$ for any  $b\in\bbZ$, 
we also have exact sequences
\begin{equation}\label{coh_nbsc}
0 \longrightarrow \HH^0(C, \ell_C(b)) \longrightarrow 
\HH^0(C, N_{C/Y}(b)) \longrightarrow \overline{W}_{b+e} \longrightarrow 0,
\end{equation}
where $$\overline{W}_{b+e}:= \textrm{Image}\left[~\HH^0(C, N_{C/Y}(b)) 
\longrightarrow\HH^0(C, \Oh_C(b+e))~\right].$$
As before, it is evident that $\overline{W}=\overline{W}_e$ in the above 
notation.

We have the following commutative diagram, where the vertical maps are 
the multiplication maps
\small
\begin{equation}\label{finale}
\begin{tikzcd}
0\arrow[r] &  \HH^0(C, \ell_C(d-5))\tensor{V_d}\arrow[r]\arrow[d] & 
\HH^0(C, N_{C/Y}(d- 5))\tensor{V_d}\arrow[r]\arrow[d] & 
\overline{W}_{d+e-5}\tensor{V_d}\arrow[r]\arrow[d] & 0 \\
0\arrow[r]  & \HH^0(C, \ell_C(2d-5))\arrow[r] & 
\HH^0(C, N_{C/Y}(2d-5))\arrow[r] &  
\overline{W}_{2d+e-5}\arrow[r] & 0.
\end{tikzcd}
\end{equation}
\normalsize
To prove $(b)$ above, which is to say that the vertical map in the middle in 
the above diagram is surjective, it suffices to prove that the left and the 
right vertical arrows are surjective which we now proceed to prove.

\smallskip

\noindent{\it The left vertical map:} We first note that
$d> 5$ by assumption. Next, we have the exact 
sequence defining the ACM bundle $E$ associated to $Z$
$$0 \longrightarrow \Oh_X^{\oplus r-1} \longrightarrow E \longrightarrow I_{Z/X}(e) \longrightarrow 0.$$
Restricting this sequence to the hyperplane section $Y\subset X$, we get
the sequence
$$0 \longrightarrow \Oh_Y^{\oplus r-1} \longrightarrow E\tensor\Oh_Y \longrightarrow I_{C/Y}(e) \longrightarrow 0.$$
Note that $E\tensor\Oh_Y$ is also ACM. 
Taking duals and arguing as in the proof of Proposition \ref{serre}, we 
see that $\ell_C$ is also generated in degree $0$. The surjectivity of the 
left vertical map now follows from Corollary \ref{2gen}.

\smallskip

\noindent{\it The right vertical map:} Note that by definition
$W_{d+e-5, C} \subset \overline{W}_{d+e-5}$, whence 
the surjectivity of the right vertical map
now follows from Proposition \ref{gg}.
}
\end{proof}
We end this section with the following observation in the $n=3$ case.
{\color{black} 
\begin{remark}\label{3d}
Let $X$ be a general hypersurface of dimension $3$, 
and $C \subset X$ an ACM local complete intersection curve. 
Assume that condition $(A)$ above is replaced 
by the condition
\begin{itemize}
    \item[(A')] $I_{C/\bbP^4}(e+1)$ is globally generated, and as before that
   \item[(B)] there is a smooth member $S \in \vert I_{Z/X}(e)\vert$ (in particular $e\geq 1$).
\end{itemize} 
Then the above analysis shows that the linear subsystem 
$W \subset \HH^0(\bbP^4, \Oh_{\bbP^4}(e+1))$ given by the exact sequence
$$0 \longrightarrow \HH^0(\bbP^4, I_{C/\bbP^4}(e+1)) \longrightarrow W \longrightarrow \overline{W}_{e+1} \longrightarrow 0$$
is base point free. The same argument as above shows that, by the Beauville-M\'erindol criterion, 
the normal bundle sequence for the inclusions $C \subset X \subset \bbP^4$ splits.

\end{remark}
}



\section{Proofs of Theorem \ref{mainthm1}
and Theorem \ref{ggextendable}}
Before we provide the proofs of Theorem \ref{mainthm1} and Theorem 
\ref{ggextendable}, we will need the following elementary result.
{\color{black}
\begin{lemma}\label{restriction}
Let $X \subset \bbP^{n+1}$ be a smooth hypersurface and 
$Y\subset \bbP^n$ be obtained by taking a smooth hyperplane section. 
Let $E$ be a rank $r$ vector bundle on $X$. Then
\begin{enumerate}
    \item If $E$ is ACM, then so is its restriction $E\tensor\Oh_Y$.
    \item If $E$ is ACM and $E\tensor\Oh_Y$ splits into a sum of line 
    bundles of the form $\Oh_Y(a)$, then so does $E$.
\end{enumerate}
\end{lemma}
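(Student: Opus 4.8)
The plan is to prove both statements using the standard restriction sequence together with the ACM hypotheses, treating the two parts in order since part $(2)$ will build on the cohomological vanishing established in part $(1)$.

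For part $(1)$, I would start from the short exact sequence obtained by restricting $E$ to the hyperplane section $Y\subset X$, namely
\[
0 \longrightarrow E(-1) \longrightarrow E \longrightarrow E\tensor\Oh_Y \longrightarrow 0,
\]
where $E(-1)$ denotes the twist by $\Oh_X(-1)$ corresponding to the divisor $Y$. Twisting by $\Oh_X(a)$ for all $a\in\bbZ$ and passing to the long exact sequence in cohomology, one obtains for each $i$ and each $a$ an exact piece
\[
\HH^i(X, E(a)) \longrightarrow \HH^i(Y, E\tensor\Oh_Y(a)) \longrightarrow \HH^{i+1}(X, E(a-1)).
\]
Since $E$ is ACM, $\HH^i_*(X, E)=0$ for $1\leq i\leq n-1$ (where $n=\dim X$); as $\dim Y = n-1$, the range of intermediate cohomology we must kill for $E\tensor\Oh_Y$ is $1\leq i\leq n-2$. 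For such $i$, both flanking terms $\HH^i(X,E(a))$ and $\HH^{i+1}(X,E(a-1))$ vanish for all $a$, forcing $\HH^i_*(Y, E\tensor\Oh_Y)=0$ and establishing that $E\tensor\Oh_Y$ is ACM. I would also note the projective-normality bookkeeping if the paper's definition of ACM requires it, but the intermediate vanishing is the substantive content.

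For part $(2)$, the hypothesis is that $E\tensor\Oh_Y \isom \bigoplus_i \Oh_Y(a_i)$, and I want to lift this splitting to $X$. The natural approach is to show each of the $r$ sections of $E\tensor\Oh_Y(-a_i)$ defining the splitting lifts to a section of $E(-a_i)$ on $X$. Concretely, from the restriction sequence twisted by $-a_i$,
\[
\HH^0(X, E(-a_i)) \longrightarrow \HH^0(Y, E\tensor\Oh_Y(-a_i)) \longrightarrow \HH^1(X, E(-a_i-1)),
\]
the obstruction to lifting lies in $\HH^1(X, E(-a_i-1))$, which vanishes because $E$ is ACM (assuming $1\leq 1 \leq n-1$, i.e. $n\geq 2$). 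Thus each trivializing section lifts, yielding a map $\bigoplus_i \Oh_X(a_i) \longrightarrow E$ whose restriction to $Y$ is the given isomorphism. The remaining step is to promote this to an isomorphism on $X$: the cokernel and kernel are supported on a set disjoint from $Y$, but since $Y$ is an ample divisor every nonempty closed subscheme meets it, so both must be empty, and the map is an isomorphism.

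The main obstacle I anticipate is the last step of part $(2)$: upgrading a map that is an isomorphism on the ample divisor $Y$ to an isomorphism on all of $X$. The cleanest argument is that the map $\varphi\colon \bigoplus_i \Oh_X(a_i)\to E$ of vector bundles of equal rank has determinant a section of a line bundle whose restriction to $Y$ is nonvanishing; since $\det\varphi$ vanishes along a divisor (or nowhere), and any effective divisor meets the ample $Y$, the determinant must be nowhere zero, so $\varphi$ is an isomorphism. I would phrase this via the ampleness of $\Oh_X(1)$ forcing the non-isomorphism locus—a proper closed subset—to meet $Y$ unless it is empty, exactly paralleling the base-locus arguments used earlier in the proof of Proposition~\ref{genericallybpf}.
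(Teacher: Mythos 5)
Your proof is correct and follows essentially the same route as the paper: part (1) is the identical long-exact-sequence argument, and part (2) differs only in that you lift the inclusions $\Oh_Y(a_i)\to E\tensor\Oh_Y$ using $\HH^1_*(X,E)=0$, whereas the paper lifts the projection $E\to\bigoplus_j\Oh_Y(a_j)$ using $\HH^1_*(X,E^\vee)=0$. Both versions finish with the same determinant-plus-ampleness argument (which you in fact spell out more carefully than the paper does), so there is nothing to correct.
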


\begin{proof}
We have the short exact sequence 
$$0 \longrightarrow E(-1) \longrightarrow E \longrightarrow E\tensor\Oh_Y \longrightarrow 0.$$
Taking cohomology, we have the long exact sequence
$$ \cdots \longrightarrow \HH^i(X, E(j)) \longrightarrow \HH^i(Y,E(j)\tensor\Oh_Y) \longrightarrow 
\HH^{i+1}(X, E(j-1))  \longrightarrow \cdots. $$
Since the extreme terms vanish for $0 < i < n-1$ and $\forall \, j \in \bbZ$, $(1)$  
follows. For $(2)$, write 
$E\tensor\Oh_Y\isom\bigoplus_{j=1}^r\Oh_{Y}(a_j)$ and note 
that the composed map 
\begin{equation*}
    E\longrightarrow E\tensor\Oh_Y\isom\bigoplus_{j=1}^r\Oh_{Y}(a_j)
\end{equation*} lifts to a map 
\begin{equation}\label{composed}
    E\longrightarrow\bigoplus_{j=1}^r\Oh_{X}(a_j)
\end{equation} via the exact sequence 
$$0\longrightarrow\bigoplus_{j=1}^r\Oh_{X}
(a_j-1)\longrightarrow\bigoplus_{j=1}^r\Oh_{X}(a_j)
\longrightarrow\bigoplus_{j=1}^r\Oh_{Y}(a_j)\longrightarrow 0$$ 
as $H^1_*(X,E^{\vee})=0$. Since \eqref{composed} is a map between 
vector bundles of the same rank, we conclude that it is an isomorphism. 
Indeed, this is a consequence of the 
fact that the determinant of the map is non-zero as it is so on $Y$. This 
implies that $E$ is also a direct sum of line bundles.
\end{proof}
}

\begin{proof}[Proof of Theorem \ref{mainthm1}]  
The proof is based by induction on the dimension $n$. Let us first deal with the base case:
{\color{black}
\begin{claim}\label{c2}
Theorem \ref{mainthm1} holds when $n=4$.
\end{claim}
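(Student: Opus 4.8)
The plan is to reduce the entire statement to the splitting result of Theorem \ref{mainthm2} and then repackage the conclusion through the equivalences of Corollary \ref{mprvcor}. Concretely, under the hypotheses of Theorem \ref{mainthm1} with $n=4$, Theorem \ref{arr} together with Proposition \ref{serre} produces an ACM rank $r$ bundle $E$ on $X\subset\bbP^5$ fitting in $0\to\Oh_X^{\oplus r-1}\to E\to I_{Z/X}(e)\to 0$: condition (iii) supplies the $(r-1)$ degree $-e$ generators of $\omega_Z\tensor\omega_X^{-1}$ required in Theorem \ref{arr}(ii), while the vanishing $\HH^2(X,\Oh_X(-e))=0$, which holds because $X$ is a smooth fourfold hypersurface with no intermediate cohomology, gives Theorem \ref{arr}(i). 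By Lemma \ref{l3}(1) it then suffices to prove that $E$ is a direct sum of line bundles; and since $E$ is ACM by Proposition \ref{serre}, Lemma \ref{restriction}(2) reduces this to checking that the restriction $E\tensor\Oh_Y$ splits into line bundles $\Oh_Y(a_i)$ for a general hyperplane section $Y=X\cap H$.

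First I would verify the hypotheses of Theorem \ref{mainthm2}. Condition (i) is identical. Global generation of $I_{Z/X}(e)$ (hypothesis (ii) of Theorem \ref{mainthm1}) at once yields global generation of $I_{Z/X}(e+1)=I_{Z/X}(e)\tensor\Oh_X(1)$, which is condition (iii); a Bertini argument applied to the globally generated sheaf $I_{Z/X}(e)$ produces a smooth member $\Theta\in|I_{Z/X}(e)|$, giving condition (ii); and condition (iv) is exactly hypothesis (iii) of Theorem \ref{mainthm1}. As $X$ is general by hypothesis, Theorem \ref{mainthm2} applies and shows that the normal bundle sequence \eqref{nbs} for the inclusions $C\subset Y\subset\bbP^4$, with $C=Z\cap H$, splits.

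Next I would transfer this splitting back to $E$. The pair $C\subset Y$ is again an ACM, local complete intersection codimension $2$ subvariety: integrality and the lci property are preserved under a general hyperplane section by Bertini's theorems, and the ACM property descends by combining the two ideal-sheaf sequences recorded in the preliminaries with the standard fact that a general hyperplane section of an ACM subscheme is ACM. Restricting the defining sequence of $E$ to $Y$ gives $0\to\Oh_Y^{\oplus r-1}\to E\tensor\Oh_Y\to I_{C/Y}(e)\to 0$, exact because multiplication by the hyperplane is injective on the torsion-free sheaf $I_{Z/X}(e)$; thus $E\tensor\Oh_Y$ is precisely the bundle associated to $C\subset Y$, and it is ACM by Lemma \ref{restriction}(1). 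Applying Corollary \ref{mprvcor} to $C\subset Y\subset\bbP^4$ (the case $n=3$), the splitting of \eqref{nbs} forces $E\tensor\Oh_Y$ to be a direct sum of line bundles; since $\Pic(Y)=\bbZ\cdot\Oh_Y(1)$ by the Grothendieck--Lefschetz theorem, these summands have the form $\Oh_Y(a_i)$. Lemma \ref{restriction}(2) then upgrades this to a splitting $E\isom\bigoplus_i\Oh_X(a_i)$, and Lemma \ref{l3}(1) concludes that $Z$ is extendable.

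The essentially new mathematics is entirely contained in Theorem \ref{mainthm2}, which here is assumed, so the base case is largely bookkeeping. The only points where I expect genuine friction are the preservation of the relevant properties under passage to the general hyperplane section --- that $C\subset Y$ remains ACM and lci --- and the Bertini step producing a smooth $\Theta\in|I_{Z/X}(e)|$ from mere global generation, where one must use that $Z$ is a local complete intersection so that the differentials of the defining sections cut out a smooth divisor through $Z$.
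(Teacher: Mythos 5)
Your overall reduction --- verify the hypotheses of Theorem \ref{mainthm2}, deduce the splitting of the normal bundle sequence for $C\subset Y\subset\bbP^4$, convert that into a splitting of $E\tensor\Oh_Y$ via Corollary \ref{mprvcor}, and lift to $E$ by Lemma \ref{restriction}(2) --- is exactly the paper's strategy, and that part of your argument is fine. The genuine gap is the step you yourself flag as ``friction'': producing a smooth member $\Theta\in|I_{Z/X}(e)|$ by Bertini. Theorem \ref{mainthm1} only assumes $Z$ is a local complete intersection, not smooth, and for a singular lci surface a smooth divisor containing it need not exist: if at a point $z\in Z$ the two local generators $f,g$ of $I_{Z/X}$ both have vanishing differential at $z$ (e.g.\ $I_{Z/X}=(x^2-y^3,\,z^2-w^3)$ in local coordinates), then every local section $af+bg$ of $I_{Z/X}$ has vanishing differential at $z$, so \emph{every} member of $|I_{Z/X}(e)|$ is singular at $z$. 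The lci hypothesis alone does not prevent this, so your condition (ii) for Theorem \ref{mainthm2} cannot be secured for $Z$ itself. (Even for smooth $Z$ the naive Bertini count is borderline, since $\dim Z=\operatorname{codim}_X Z=2$, so the incidence variety of pairs consisting of a section and a point of $Z$ where it is singular has dimension equal to $\dim\HH^0$.)

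The paper circumvents this by never applying Theorem \ref{mainthm2} to $Z$ directly: since $E$ is ACM and globally generated (Proposition \ref{serre}), a general $(r-1)$-dimensional subspace $V_{r-1}\subset\HH^0(X,E)$ cuts out a \emph{smooth} ACM surface $Z'$ (by \cite{Ban}) with the same associated bundle $E$, and a smooth $\Theta\in|I_{Z'/X}(e)|$ is then obtained as the degeneracy locus $D_{r-1}(\phi)$ of a general map $\phi:V_r\tensor\Oh_X\to E$ with $V_r\supset V_{r-1}$. Theorem \ref{mainthm2} is applied to $Z'$, yielding the splitting of $E\tensor\Oh_Y$ and hence of $E$; extendability of the original $Z$ then follows from Lemma \ref{l3}(1) exactly as in your last step. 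Your proof becomes correct once you insert this replacement of $Z$ by $Z'$ before invoking Theorem \ref{mainthm2}.
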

\noindent{\it Proof of Claim \ref{c2}:} Let $Z\subset X$ be a local complete intersection ACM surface satisfying the 
hypotheses of Theorem \ref{mainthm1}. By Lemma \ref{l3}, it is enough to show that $E$ in \eqref{defeq} is a 
direct sum of line bundles. Note that $E$ is ACM and globally generated by Proposition \ref{serre}. {Pick a 
general subspace $V_{r-1}\subset H^0(X,E)$ of dimension $r-1$, which by \cite{Ban} defines a smooth ACM (hence 
irreducible) surface $Z'\subset X$. Moreover we have the exact sequence 
\begin{equation}\label{ft}
    0\longrightarrow\mathcal{O}_X^{\oplus r-1}\longrightarrow E\longrightarrow I_{Z'/X}(e)\longrightarrow 0
\end{equation}
Note that the conditions (ii) and (iii) of Theorem \ref{mainthm2} continue to hold for $Z'\subset X$.}

Now, we note that there is a smooth $\Theta\in |I_{Z'/X}(e)|$ which is obtained by choosing a general $r$-dimensional subspace 
$V_r\subset H^0(X,E)$ containing $V_{r-1}$ and taking $\Theta$ to be the degeneracy locus $D_{r-1}(\phi)$ where 
$$ \phi: V_r\tensor\Oh_X \longrightarrow E.$$
Consequently, the normal bundle sequence for the inclusions $C'\subset Y 
\subset \bbP^{4}$ obtained by taking a general hyperplane section of the 
inclusions $Z' \subset X \subset \bbP^5$ splits by 
Theorem \ref{mainthm2}, whence $E\tensor\Oh_Y$ is a direct sum of line bundles by 
Corollary \ref{mprvcor}. By Lemma \ref{restriction}, $E$ is a direct sum of line bundles as well.
\hfill$\spadesuit$

\smallskip

Let us continue with the proof of Theorem \ref{mainthm1}. 
Now we carry out the induction step. Since the 
assertion is already proven for $n=4$ in Claim \ref{c2}, we assume $n\geq 5$.
Recall the exact sequence
\begin{equation*}
    0 \longrightarrow \Oh_X^{\oplus r-1} \longrightarrow E \longrightarrow I_{Z/X}(e) \longrightarrow 0,
\end{equation*}
where $E$ is a rank $r$ globally generated ACM bundle on $X$ (see Proposition \ref{serre}). Setting $X_n:=X$, $Z_{n-2}:=Z$, 
and repeatedly restricting this sequence by general hyperplane sections $X_i$ of dimension $i$, one obtains 
codimension $2$ subvarieties $Z_{i-2}$ of dimension $i-2$, and the exact sequences
\begin{equation}\label{ex21}
    0 \longrightarrow {\Oh_X}^{\oplus r-1}_i \longrightarrow E_i \longrightarrow I_{Z_{i-2}/X_i}(e) 
    \longrightarrow 0\textrm{ for all }i\geq 4,
\end{equation}
where $E_i:=E|_{X_i}$. By Lemma \ref{restriction}, $E_i$ is ACM for $i\geq 4$, whence $Z_{i-2}\subset X_i$ is ACM 
by \eqref{ex21} for $i$ in the same range. As a result, the pair $(Z_{i-2},X_i)$ satisfies the hypotheses of 
the Theorem for all $i\geq 4$. 

Now, $E_4$ is a direct sum of line bundles by the proof of Claim \ref{c2}, 
and by Lemma \ref{restriction}, so is $E$. 
Hence $Z$ is extendable by Corollary \ref{mprvcor}.

{\color{black} 
When $n=3$, the proof is as in the proof of the $n=4$ case, i.e., Claim 
\ref{c2}, except that we invoke Remark \ref{3d} instead of Theorem 
\ref{mainthm2}.
}
}
\end{proof}

\begin{proof}[Proof of Theorem \ref{ggextendable}]
{\color{black}
By the arguments in the proof of Theorem \ref{mainthm1}, it is enough to show that $E_4$ is a direct sum of line 
bundles where $E_4:=E|_{X_4}$ and $X_4$ is a obtained by intersecting $n-4$ general hyperplane sections of $X$. 
As $E_4$ is globally generated with $c_1(E_4)=\Oh_{X_4}(e)$, a choice of $r-1$ general sections yields an exact sequence
$$0 \longrightarrow \Oh_{X_4}^{\oplus r-1} \longrightarrow E_4 \longrightarrow I_{Z_2/X_4}(e) \longrightarrow 0,$$
where $I_{Z_2/X_4}$ is the ideal sheaf of a pure codimension $2$ smooth ACM subscheme $Z_2$ in $X_4$. First assume 
$Z_2=\emptyset$ whence $I_{Z_2/X_4}=\Oh_{X_4}$. In this case, clearly the above exact sequence is split as 
$\HH^1_*(X_4,\Oh_{X_4})=0$ whence $E_4$ is a direct sum of line bundles. So, we may assume $Z_2\neq\emptyset$, in 
particular $H^0(X_4,I_{Z_2/X_4})=0$. Since $Z_2\subset X_4$ is ACM, we see that $h^0(Z_2,\Oh_{Z_2})=1$, 
in particular $Z_2$ is irreducible. Therefore, applying the proof of Claim \ref{c2}, we conclude that 
$E_4$ is a direct sum of line bundles. 
}
\end{proof}

\end{document}